\theoremstyle{definition}
\newtheorem{define}{Definition}[section]
\theoremstyle{plain}
\newtheorem{thm}{Theorem}[section]
\newtheorem{lem}{Lemma}[section]
\newtheorem{prop}{Proposition}[section]
\newtheorem{cor}{Corollary}[section]
\theoremstyle{remark}
\newtheorem*{rmk}{Remark}
\newtheorem*{notation}{Notation}
\newcommand{\R}{\mathbb{R}}
\title{Higher Convexity and Iterated Second Moment Estimates}
\author{Peter J. Bradshaw, Brandon Hanson, and Misha Rudnev}
\begin{document}

\maketitle

\begin{abstract}
    We prove bounds for the number of solutions to
  $$a_1 + \dots + a_k = a_1' + \dots + a_k'$$ over $N$-element sets of reals, which are sufficiently convex or near-convex. A near-convex set will be the image of a set with small additive doubling under a convex function with sufficiently many strictly monotone derivatives. We show, roughly, that every time the number of terms in the equation is doubled, an additional saving of $1$ in the exponent of the trivial bound $N^{2k-1}$ is made, starting from the trivial case $k=1$. In the context of near-convex sets we also provide explicit dependencies on the additive doubling parameters. 
  
  Higher convexity is necessary for such bounds to hold, as evinced by sets of perfect powers of consecutive integers. We exploit these stronger assumptions using an idea of Garaev, rather than the ubiquitous Szemer\'edi-Trotter theorem, which has not been adapted in earlier results to embrace higher convexity.
  
  As an application we prove small improvements for the best known bounds for sumsets of convex sets under additional convexity assumptions.

\end{abstract}

\section{Introduction}

Let $A$ be a finite subset of $\R$.
Additive combinatorics is concerned with quantifying ``additive structure'' in $A$. 
The size of the sum or difference set 
\[ A\pm A := \{a_1 \pm a_2: a_1,a_2 \in A\} \]
is one such measure of this structure.

In this paper, we will primarily be interested in the stronger second moment quantities. The additive energy is defined as
\[ E(A) := |\{ (a_1,a_2,a_1',a_2') \in A^4: a_1 + a_2 = a_1' + a_2' \}|.\]

More generally
\begin{define}
Let $A_1, \dots, A_k$ be sets of reals. Define
\begin{equation}\label{eq:energy} T(A_1,\dots,A_k) := |\{ (a_1,\dots,a_k,a_1',\dots,a_k') \in \prod_{i=1}^k A_i \times \prod_{i=1}^k A_i: a_1 + \dots + a_k = a_1' + \dots + a_k' \}|. \end{equation}

Here and henceforth $\prod_{i=1}^k A_i$ means Cartesian product, which will also be denoted as $A^k$ when all $A_i=A$.
In the latter case we also write
\[ T_k(A) := T(\underbrace{A,\dots,A}_{k \text{ times}}). \]
For the special case when there are only two summands, we write
$E(A):=T_2(A)$ and $E(A,B):= T(A,B)$.

\end{define}

In this paper, we will specifically explore the additive structure of convex sets and near-convex sets. 
A set $A := \{a_1 < \dots < a_{N} \}$ is called convex if the sequence of adjacent differences $a_{i+1}-a_i$
is strictly monotone for all $1\leq i \leq N-1$. 

If $|A|=N$, then trivially $N^k\leq T_k(A)\leq N^{2k-1},$ and we are interested in proving non-trivial upper bounds for $T_k(A)$. We have the heuristic in mind that convex sets are not very additively structured, so having ``enough'' convexity should push these upper bounds, ideally, close to the lower bound $N^k$. The fact that more and more convexity should be required for higher $k$ is illustrated by setting $A$ to be the set of consecutive perfect powers.

Many of today's state of the art results concerning additive properties of convex sets have been obtained via a particular version of the Szemer\'edi--Trotter theorem, which assumes that one can write a convex set $A$ in the form $A=f([N])$ where the function $f$ has strictly monotone derivative. From now on we just say ``monotone'', meaning ``strictly monotone''. Here $[N]=\{1,\ldots,N\}$, the integer interval. Indeed, spline interpolation can be used to produce such an $f$, so there is no loss of generality in assuming $A=f([N])$. 

Using the Szemer\'edi--Trotter theorem to study convex sets was pioneered by Elekes, Nathanson and Ruzsa \cite{ElekesNathansonRuzsa2000}. However, they only use a special case (where the point set is a Cartesian product), which can be proved by an elementary {\em lucky pairs} method. The reader who is experienced in combinatorial geometry may recognize this as cell-partitioning, but with some tweaking which exploits additive structure. The lucky pairs terminology has been adopted from J. Solymosi; the argument goes back, in particular, to \cite{SolymosiTardos2007}, where it is shown that in the context of point sets which are Cartesian products, the Szemer\'edi-Trotter theorem admits an elementary proof, rather than the original \cite{St83} or subsequent general case proofs.

In this paper, we define a similar notion of lucky pairs, however without any use of the Szemer\'edi-Trotter theorem or geometric incidence arguments. 
Instead, we develop the idea of Garaev from \cite{Garaev2000}, which underlies his elementary proof of the following energy bound. This bound was previously established by Konyagin \cite{Konyagin2000} via the Szemer\'edi-Trotter theorem.

\begin{thm}[Konyagin--Garaev]
\label{garaevs energy bound}
Let $A$ be a 
convex set of $N$ elements.
Then
\[E(A) \ll N^{5/2}. \]
\end{thm}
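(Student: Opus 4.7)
The plan is a dyadic second-moment analysis of the difference representation function, with the convexity of $A$ entering through a level-set bound proved in the spirit of Garaev's elementary argument. Write $A = \{a_1 < a_2 < \dots < a_N\}$ and set $r(d) := |A \cap (A+d)|$, so that $E(A) = \sum_d r(d)^2$ and $\sum_d r(d) = N^2$. For each dyadic $\tau \geq 1$ let $L_\tau := \{d : r(d) \geq \tau\}$; then $E(A) \ll \sum_{\tau \text{ dyadic}} \tau^2 |L_\tau|$, and the trivial estimate $|L_\tau| \leq N^2/\tau$ follows from $\sum_d r(d) = N^2$, with no use of convexity.

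The core of the proof is the convex level-set bound
\begin{equation*}
|L_\tau| \ll N^3 / \tau^3.
\end{equation*}
For each $d \in L_\tau$ I fix $\tau$ representations $(a_{i_s}, a_{j_s})$, $s = 1, \dots, \tau$, of $d$, ordered so that $j_1 < \dots < j_\tau$ (equivalently $i_1 < \dots < i_\tau$). Then one has the telescoping identity
\begin{equation*}
a_{i_{s+1}} - a_{i_s} = a_{j_{s+1}} - a_{j_s}, \quad 1 \leq s < \tau,
\end{equation*}
which encodes a coincidence of differences internal to $A$. Garaev's strategy, as I would adapt it here, is to package triples of representations of the same $d$ into a map $\Phi$ from pairs $(d, (s_1,s_2,s_3))$ with $s_1 < s_2 < s_3 \leq \tau$ into a set of size at most $O(N^3)$ (for instance, triples of indices in $[N]$), and then to verify near-injectivity of $\Phi$ using the strict monotonicity of the consecutive gaps $\delta_i := a_{i+1} - a_i$ guaranteed by convexity: different starting configurations, when slid by a single gap, produce distinct outputs because no two $\delta_i$ coincide. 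This yields $\tau^3 |L_\tau| \ll N^3$, which is the desired bound.

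Once both level-set bounds are in hand, balancing them at the threshold $\tau_* = N^{1/2}$ gives
\begin{equation*}
E(A) \ll \sum_{\tau \leq N^{1/2}} \tau^2 \cdot \frac{N^2}{\tau} + \sum_{\tau > N^{1/2}} \tau^2 \cdot \frac{N^3}{\tau^3} \ll N^{5/2},
\end{equation*}
where each sum is a geometric series dominated by its extremal scale, so there is no logarithmic loss.

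\medskip

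\noindent\textbf{Main obstacle.} The entire difficulty is concentrated in the level-set bound $|L_\tau| \ll N^3/\tau^3$, and specifically in designing the packing map $\Phi$ and verifying its near-injectivity purely combinatorially. Convexity is indispensable: as the perfect-power example alluded to in the introduction indicates, a weakly structured gap sequence defeats injectivity and the bound fails. The delicate point is to realize this injectivity via elementary counting of ``lucky'' configurations that the monotonicity of the $\delta_i$ forbids from coinciding, rather than by incidence-theoretic machinery. This elementary device, replacing Szemer\'edi--Trotter, is precisely what distinguishes Garaev's approach and what the authors presumably intend to generalize later in the paper.
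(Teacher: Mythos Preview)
Your overall framework is correct and matches the paper: dyadic decomposition, the key level-set bound $|L_\tau|\ll N^3/\tau^3$, and balancing against the trivial bound at $\tau_*=N^{1/2}$. (The paper works with $r$-rich sums $X_r\subset A+A$ rather than differences, but that is immaterial.) The telescoping observation $a_{i_{s+1}}-a_{i_s}=a_{j_{s+1}}-a_{j_s}$ is also exactly what the paper starts from.

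Where you diverge is in the mechanism for $|L_\tau|\ll N^3/\tau^3$. Your proposal --- a packing map $\Phi$ from triples $(d,(s_1,s_2,s_3))$ into a set of size $O(N^3)$ with near-injectivity coming from monotonicity of the $\delta_i$ --- is not what the paper does, and as written it is not a proof: you never specify $\Phi$, and I do not see a natural choice whose fibres are bounded using only that the $\delta_i$ are distinct. The paper's (Garaev's) actual device is a pigeonhole on index gaps, not an injectivity argument on triples. Since $\sum_s(i_{s+1}-i_s)\le N$ and $\sum_s(j_s-j_{s+1})\le N$, at least $\tau/2$ of the consecutive pairs $(s,s+1)$ satisfy \emph{both} $i_{s+1}-i_s\ll N/\tau$ and $j_s-j_{s+1}\ll N/\tau$; these are the ``lucky pairs''. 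Each one yields a solution of $a_{i+h_1}-a_i=a_{j+h_2}-a_j$ with $h_1,h_2\ll N/\tau$. Summing over $d\in L_\tau$ gives $\tau|L_\tau|\ll (N/\tau)^2\cdot\max_{h_1,h_2}|\{(i,j):a_{i+h_1}-a_i=a_{j+h_2}-a_j\}|$, and \emph{only now} does convexity enter: for fixed $h$, the map $i\mapsto a_{i+h}-a_i$ is injective, so the inner count is at most $N$, whence $|L_\tau|\ll N^3/\tau^3$. The piece you are missing is precisely this pigeonhole step that confines $h_1,h_2$ to a range of length $N/\tau$ and produces the factor $(N/\tau)^2$; without it there is no cubic saving to harvest.
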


\begin{notation}
We use Vinogradov's symbol extensively. We write $X \ll Y$ to mean that $X \leq CY$ for some absolute constant $C$, and $X \lesssim Y$ to mean that $X \leq C_1Y(\log Y)^{C_2}$ for some absolute constants $C_1,C_2$.
For several proofs herein, a suppressed constant $C:=C(k)$ may depend on the dimension of the problem $k$. These dependencies can be easily calculated explicitly in the proofs, however we will not, since we canonically think of $k$ as fixed and small compared to the other parameters in our results.
\end{notation}

\medskip
Inductively, one can define higher convexity for finite sets of reals, beginning by saying a $0$-convex set is a set, written in monotone order. For $s\geq 1$, a set $A$ with $N$ elements is $s$-convex if the set of neighbouring differences $\{a_{i+1}-a_i,\,1\leq i\leq N-1\}$ is $(s-1)$-convex. 
Under this definition, a $1$-convex set is simply a convex set.
A telescoping argument implies that if $A$ is $s$-convex, then for any $1\leq h<N$, the set 
$$\Delta_h A:= \{a_{i+h}-a_i,\,1\leq i\leq N-h\}$$ is $(s-1)$-convex.
At times, we may implicitly assume that the $h$-difference set $\Delta_h A$ has $N$ elements (rather than $N-h$), but this will not be of any consequence.

Higher convexity was recently investigated by Roche-Newton and the last two authors \cite{HansonRoche-NewtonRudnev2020} aiming at proving ad infinitum with $k$, growth of $k$-fold sumsets of sufficiently convex sets. This paper develops a different, although not unrelated and also elementary approach to moments $T_k(A)$. Corollary \ref{cor:size} and estimate \eqref{est:card} in Corollary \ref{corollary energy small doubling intro} imply, at least on the qualitative level, the main results in \cite{HansonRoche-NewtonRudnev2020}. Hence in some sense, this paper is a sequel to \cite{HansonRoche-NewtonRudnev2020}. Garaev's method, explored in this paper, has been brought to our attention by the work \cite{Olmezov2020} by Olmezov.

Garaev's argument uses only the following weaker implication of the convexity of $A$, namely that for every $1\leq h<N$, the collection of differences $\{a_{i+h}-a_i\}$ is indeed a set, rather than a multiset. In our forthcoming induction of Garaev's argument for $s$-convex sets, we will iterate this property, applying it to the difference sets $\{a_{i+h}-a_i\}$ as well as to $A$.

We extend Theorem \ref{garaevs energy bound} to $s$-convex sets as follows.

\begin{thm}\label{cor:GG}
For $s\geq 0$, let $k=2^s$ and let $A_1,\dots,A_k$ be $s$-convex sets with $|A_i| \leq N$ for all $1\leq i\leq k$.
Then
\[ T(A_1,\dots, A_k) \ll N^{2^{s+1}-1-s + \alpha_s}, \]
where $\alpha_0 = 0$ and $\alpha_s = \sum_{j=1}^s j2^{-j}.$
\end{thm}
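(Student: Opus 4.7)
The plan is to induct on $s$, with $k=2^s$ moving in step. The base case $s=0$ is immediate: for a single set $A_1$ of at most $N$ elements, $T(A_1)=|A_1|\le N$, matching the target exponent $2^1-1-0+\alpha_0=1$.

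For the inductive step, assume the theorem for $s$, and consider $(s+1)$-convex sets $A_1,\ldots,A_{2k}$ with $k=2^s$. The crucial structural input, emphasised in the introduction, is that by the very definition of higher convexity each difference set $\Delta_h A_i = \{a_{i,j+h}-a_{i,j}\}$ is $s$-convex of cardinality at most $N$, and hence a legitimate input for the induction hypothesis. The saving will come from iterating Garaev's ``not-a-multiset'' idea simultaneously at the level of $A_i$ and of the $\Delta_h A_i$.

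Concretely, I would start from the standard identity
\[ T(A_1,\ldots,A_{2k}) = \sum_{d_1+\cdots+d_{2k}=0} \prod_{i=1}^{2k} r_{A_i - A_i}(d_i), \]
and rewrite each factor as $r_{A_i - A_i}(d_i)=\sum_{h_i \geq 1}\mathbf{1}[\pm d_i\in \Delta_{h_i}A_i]$ plus a diagonal term, valid because each $A_i$ is in particular convex. This recasts $T$ as a sum over $(h_1,\ldots,h_{2k})$ of zero-sum representation counts in $\prod_i (\pm \Delta_{h_i}A_i)$. An iterated Cauchy--Schwarz then converts these zero-sum counts into $2k$-fold additive energies of the $s$-convex sets $\Delta_{h_i}A_i$, and a further Cauchy--Schwarz halves the number of summands so that the induction hypothesis becomes directly applicable, yielding a bound $\ll N^{2k - 1 - s + \alpha_s}$ per admissible configuration of the $h_i$.

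The final step is a dyadic partition of the $h_i$'s; summing a geometric series in the dyadic scale should produce exactly the extra saving $\alpha_{s+1}-\alpha_s = (s+1)/2^{s+1}$ that distinguishes the $(s+1)$-exponent from the $s$-exponent. I expect the main obstacle to be coordinating the two Cauchy--Schwarz layers with the dyadic averaging so that the geometric sum delivers precisely this saving, rather than a weaker constant; logarithmic losses from the dyadic decomposition are absorbed into the Vinogradov $\ll$ notation as per the paper's conventions.
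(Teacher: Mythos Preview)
Your identity
\[
T(A_1,\ldots,A_{2k})=\sum_{d_1+\cdots+d_{2k}=0}\ \prod_{i=1}^{2k} r_{A_i-A_i}(d_i)
\]
and the subsequent expansion $r_{A_i-A_i}(d_i)=\sum_{h_i\ge 1}\mathbf 1[\pm d_i\in\Delta_{h_i}A_i]$ are both correct, and so is the observation that each $\Delta_{h_i}A_i$ is $s$-convex. The gap is in the last step. After expanding and applying Cauchy--Schwarz plus the induction hypothesis, your bound on the zero-sum count for a \emph{fixed} tuple $(h_1,\ldots,h_{2k})$ is $\ll N^{2^{s+1}-1-s+\alpha_s}$, and this bound is \emph{uniform in the $h_i$}: neither the induction hypothesis nor your Cauchy--Schwarz produces any dependence on the sizes of the $h_i$ (the sets $\Delta_{h_i}A_i$ all have $\le N$ elements). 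Summing over the full range $h_i\in[N]$ therefore costs a factor of $N^{2k}=N^{2^{s+1}}$, giving $T\ll N^{2^{s+2}-1-s+\alpha_s}$, which misses the target by $N^{1-(s+1)2^{-(s+1)}}$. There is no geometric series to sum, because the summands do not decay with $h_i$; your ``dyadic partition'' step is where the argument collapses.

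What is missing is precisely Garaev's idea, which the paper implements via the lucky-pairs mechanism (Proposition~\ref{prop lucky pairs most general}). Rather than expanding the whole of $T$ over all $h_i$, one decomposes $T$ dyadically in the richness $r$ of the sums, and for each $x\in X_r$ the pigeonhole argument forces the indices of \emph{consecutive} representations to be close: $h_i\ll N/r^{1/(2k-1)}$. This restriction is the sole source of the saving. One then gets
\[
r|X_r|\ \ll\ \frac{N^{2k}}{r^{2k/(2k-1)}}\cdot \mathcal T_k,
\]
where $\mathcal T_k$ is the inductive bound, and the final optimisation in $r$ (balancing against the trivial bound $r|X_r|\le N^{2k}$) yields exactly the extra saving you wanted. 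In short: the induction and the Cauchy--Schwarz are fine, but you need the $r$-rich decomposition and the lucky-pairs restriction on the $h_i$, not a free sum over all of $[N]^{2k}$.
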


Loosely speaking, Theorem \ref{cor:GG} says that provided our sets are convex enough, each time we double the number of terms $k$ in the energy equation \eqref{eq:energy}, we essentially get a ``saving'' of an additional factor of $N$ off the trivial estimate $N^{2k-1}$ for the quantity $T(A_1,\dots, A_k)$.

We remark that in a recent preprint \cite{Mudgal2021} Mudgal has shown, using an ingenious application of the Balog-Szemer\'edi-Gowers theorem, that cardinality bounds of  \cite{HansonRoche-NewtonRudnev2020} alone enable one to conclude that for an $s$-convex set $A$ one has the estimate
$T_k(A)\leq N^{2k-1 -s +\alpha_k}$, with $\alpha_k\to 0$ as $k\to \infty$, although quite slowly, the characteristic scale being roughly $k\sim 2^{2^s}$, rather than $k=2^s$ here.

Theorem \ref{cor:GG} has a standard sumset implication after an application of the Cauchy--Schwarz inequality, which also illustrates the rough saving of $N$ every time $k$ doubles.
\begin{cor}\label{cor:size}
For $s\geq 1$, let $k=2^s$ and let $A_1,\dots,A_k$ be $s$-convex sets with $|A_i| = N$ for all $1\leq i\leq k$. Then
\[|A_1 \pm A_2 \pm \dots \pm A_k| \gg N^{1 +s - \sum_{j=1}^sj2^{-j}}\,. \]

\end{cor}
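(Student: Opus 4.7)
The plan is to derive the sumset bound from Theorem \ref{cor:GG} via the standard second-moment/Cauchy--Schwarz trick, after a short reduction handling the $\pm$ signs.

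First I would reduce to the case where all signs in the sumset are $+$. The key observation is that if $A = \{a_1 < \dots < a_N\}$ is $s$-convex, then $-A = \{-a_N < \dots < -a_1\}$ is also $s$-convex, because its sequence of adjacent differences is just the reverse of the adjacent difference sequence of $A$, hence strictly monotone; iterating shows all higher difference sets behave correctly. Thus, up to relabeling, it suffices to prove
\[ |A_1 + A_2 + \dots + A_k| \gg N^{1+s - \sum_{j=1}^s j 2^{-j}}. \]

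Next I would set $S := A_1 + \dots + A_k$ and introduce the representation function
\[ r(x) := |\{(a_1,\dots,a_k) \in A_1 \times \dots \times A_k : a_1 + \dots + a_k = x\}|. \]
By counting, $\sum_{x \in S} r(x) = N^k$, while $\sum_{x \in S} r(x)^2 = T(A_1,\dots,A_k)$ since the latter sum counts precisely pairs of $k$-tuples with equal sum. Applying the Cauchy--Schwarz inequality gives
\[ N^{2k} = \left(\sum_{x \in S} r(x)\right)^2 \leq |S| \cdot \sum_{x \in S} r(x)^2 = |S| \cdot T(A_1,\dots,A_k), \]
so $|S| \geq N^{2k}/T(A_1,\dots,A_k)$.

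Finally, I would plug in Theorem \ref{cor:GG}, which gives $T(A_1,\dots,A_k) \ll N^{2^{s+1}-1-s+\alpha_s}$, and compute the exponent. Since $2k = 2^{s+1}$, the exponent on $N$ in the lower bound for $|S|$ becomes
\[ 2^{s+1} - (2^{s+1} - 1 - s + \alpha_s) = 1 + s - \alpha_s = 1 + s - \sum_{j=1}^s j 2^{-j}, \]
matching the claim. There is really no serious obstacle here: the content has been absorbed into Theorem \ref{cor:GG}, and the only care needed is the sign reduction (verifying that $s$-convexity is preserved under negation) and the bookkeeping of exponents. The implicit constant inherits the constant from Theorem \ref{cor:GG}, which depends only on $k = 2^s$.
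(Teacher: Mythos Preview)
Your proof is correct and matches the paper's approach: the paper simply states that the corollary is a ``standard sumset implication after an application of the Cauchy--Schwarz inequality'' to Theorem~\ref{cor:GG}, without spelling out the details. Your write-up fills in exactly this computation, together with the minor (and correct) observation that $s$-convexity is preserved under negation to handle the $\pm$ signs.
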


We remark that for $s\geq 2$, the proof of Theorem \ref{cor:GG} can be refined to give the improved $\alpha_s = -\frac{2}{13} + \sum_{j=1}^s j2^{-j}$ and slightly more for higher values of $s$. This is follows from us bounding  $\frac{2^s-1}{2^s}$ by $1$ in the induction proof of Theorem \ref{cor:GG} plus the fact that the induction can start at $s=1$, where we have Shkredov's \cite{ShkredovHigherEnergies2013} estimate $E(A)\lesssim |A|^{32/13}$, see Theorem \ref{th:sumset bounds} below.  However, this is not the focus of the theorem and perturbs the exposition so we only comment on the modifications needed to admit this improvement.

However, since we will use the explicit bounds for $k=4$ in the last section of this paper, we state the improved result, according to the remark above.
\begin{thm}\label{th:t4}
If $A_1,A_2,A_3,A_4$ are $2$-convex sets all of size $N$, then 
\[ T(A_1,A_2,A_3,A_4) \lesssim N^{4+24/13} \qquad \text{and} \qquad |A_1\pm A_2\pm A_3\pm A_4| \gtrsim N^{2+2/13}\,. \]
Moreover, for $A_1,A_2,A_3,A_4$ being $s>1$-convex and $1\leq r\leq N^{3}$, one has
\begin{equation}\label{e:needed}
|\{x:\,r_{A_1\pm A_2\pm A_3\pm A_4}(x)\geq r \}|\lesssim \frac{N^4}{r^{7/3}}E_{s-1}\,,   
\end{equation}
where 
$$E_{s}=\sup_{B\mbox{ {\small $s$-\text{convex}, $|B|=N$}}} E(B).$$
\end{thm}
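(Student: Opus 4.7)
I proceed in three steps. First I establish the level-set bound \eqref{e:needed}, from which the total energy estimate and the sumset bound will follow.

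\textbf{Step 1: the level-set bound.} The plan is to revisit the Garaev-type iteration underlying the proof of Theorem \ref{cor:GG}, but track the distribution of the representation function
\[ r(x) = |\{(a_1,a_2,a_3,a_4) \in A_1\times A_2\times A_3\times A_4 : x = a_1\pm a_2\pm a_3\pm a_4\}|, \]
rather than only its $\ell^2$-norm $T$. Grouping the four summands into pairs writes $r$ as a convolution $r = r_{12} * r_{34}^{\pm}$, where $r_{ij}$ is the representation function of the two-fold signed sum of $A_i,A_j$. I dyadically decompose $r_{12}$ and $r_{34}$ into level sets $U_j = \{y: r_{12}(y) \sim 2^j\}$ and $V_k = \{z: r_{34}(z) \sim 2^k\}$. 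For $x \in L_r$, pigeonholing the dominant dyadic block reduces the count of $L_r$ to a weighted incidence-type count between a translate of some $U_j$ and $V_k$. The Garaev lucky-pairs step applies because, for $s$-convex $A_i$, the iterated difference sets $\Delta_h A_i$ are $(s-1)$-convex, with additive energy at most $E_{s-1}$; this provides the injectivity needed to convert the coincidence count into a convex-set energy count. Careful dyadic bookkeeping then yields
\[ r^{7/3}|L_r| \lesssim N^4 E_{s-1}, \qquad 1 \leq r \leq N^3. \]

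\textbf{Step 2: the total energy bound for $s=2$.} Here $E_{s-1} = E_1 \lesssim N^{32/13}$ by Shkredov. The layer-cake identity $T = 2\int_0^\infty r\,|L_r|\,dr$, combined with the trivial bound $r\,|L_r| \leq N^4$ for small $r$ and the level-set bound of Step 1 for large $r$, split at a scale $r_0$, gives
\[ T \lesssim N^4 r_0 + N^4 E_1 \int_{r_0}^{N^3} r^{-4/3}\,dr \lesssim N^4\bigl(r_0 + E_1 r_0^{-1/3}\bigr). \]
Optimizing at $r_0 \sim E_1^{3/4} \sim N^{24/13}$ yields $T \lesssim N^{4+24/13}$. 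The sumset bound then follows from the Cauchy--Schwarz identity
\[ |A_1 \pm A_2 \pm A_3 \pm A_4| \geq \frac{(|A_1||A_2||A_3||A_4|)^2}{T} \gtrsim \frac{N^8}{N^{4+24/13}} = N^{2+2/13}. \]

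\textbf{Main obstacle.} The substantive work is Step 1. Note that the exponent $7/3$ is forced by the optimization in Step 2: a hypothetical bound of the form $|L_r| \lesssim N^4 E_{s-1} r^{-\alpha}$, combined with the trivial $|L_r| \leq N^4/r$, produces the target total-energy bound $N^4 E_{s-1}^{3/4}$ precisely when $\alpha = 7/3$. The difficulty lies in verifying that the Garaev iteration, applied not to the full $\ell^2$-quantity $T$ but at the level-set scale with the pairing $\{A_1,A_2\}$ versus $\{A_3,A_4\}$, really does deliver this exponent through the $(s-1)$-convex energy estimate on the difference sets $\Delta_h A_i$; concretely, one must show that the crude bound $(2^s-1)/2^s \leq 1$ used in the proof of Theorem \ref{cor:GG} can be avoided at the last step of the iteration when the base of the induction is taken to be $s=1$ with Shkredov's estimate, and that the resulting savings survive dyadic pigeonholing rather than being diluted by logarithmic factors.
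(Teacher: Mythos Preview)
Your Step~2 is correct and matches the paper's approach (dyadic summation in place of layer-cake, same optimisation at $r_0\sim E_{s-1}^{3/4}$, same Cauchy--Schwarz deduction of the sumset bound).

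Your Step~1, however, takes a needlessly circuitous route and is not actually carried out. The paper obtains the level-set bound \eqref{e:needed} directly as a special case of the intermediate estimate \eqref{est:xr} in the proof of Theorem~\ref{cor:GG}: one applies Proposition~\ref{prop lucky pairs most general} with $k=4$ to the four-fold sum \emph{as is}, without any preliminary pairing. Each $x\in X_r$ produces $\gg r$ lucky pairs, each with all four index gaps $h_i=e_i-e_i'$ bounded by $\ll N/r^{1/3}$; fixing the $h_i$ (at most $N^4/r^{4/3}$ choices) leaves a single equation $\sum_{i=1}^4 a_i=0$ with $a_i\in\Delta_{h_i}A_i$, and since each $\Delta_{h_i}A_i$ is $(s-1)$-convex, Cauchy--Schwarz bounds the number of solutions by $\mathcal T_2\leq E_{s-1}$. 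This gives
\[
r|X_r|\ \ll\ \frac{N^4}{r^{4/3}}\,E_{s-1},\qquad\text{i.e.}\qquad |X_r|\ \ll\ \frac{N^4}{r^{7/3}}\,E_{s-1},
\]
which is \eqref{e:needed}. The exponent $7/3=(2k-1)/(k-1)$ falls out of the $k=4$ cell partition with no extra work; there is no ``crude bound $(2^s-1)/2^s\leq 1$'' to avoid at this stage, because that loss occurs only when one passes from \eqref{est:xr} to the closed-form $\alpha_s$ in Theorem~\ref{cor:GG}, not in the level-set estimate itself.

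By contrast, your approach writes $r=r_{12}\ast r_{34}$, dyadically decomposes each factor, and then asserts that ``the Garaev lucky-pairs step applies'' to the resulting incidence count between level sets $U_j,V_k$. But $U_j$ and $V_k$ are level sets of representation functions, not convex sets, so it is not at all clear how Proposition~\ref{prop lucky pairs most general} or the $(s-1)$-convexity of $\Delta_hA_i$ enters at that point; you never say which set plays the role of $B_i$ in the lucky-pairs lemma, nor how the coincidence count unpacks into an energy of $(s-1)$-convex sets. The sentence ``careful dyadic bookkeeping then yields $r^{7/3}|L_r|\lesssim N^4E_{s-1}$'' is an assertion, not an argument. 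The pairing-then-decomposing strategy may be salvageable, but as written it is a gap, and in any case it is an unnecessary detour: apply lucky pairs once with $k=4$ and you are done.
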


\medskip

We proceed towards formulating our results concerning near-convex sets. It is easy to see (we show it explicitly in the forthcoming Lemma \ref{lem:convconv}) that if $A = f([N])$ where $f$ is a $C^s(\R)$ function with  monotone derivatives $f^{(0)}, f^{(1)},\dots, f^{(s)}$, then $A$ is $s$-convex. A function $f$ with this property is henceforth referred to as an $s$-convex function.

Using $s$-convex functions is not necessary to prove the above results about $s$-convex sets, but they provide a natural way to generalise to {\em near-convex sets.}
We say that a set $A=f(B)$ is {\em near-convex} (more specifically $K-$ near-convex) if $f$ is an $s$-convex function and the set $B$ is such that
$$
|B-B+B| \leq K|B|\,.
$$
The parameter $K$ will be referred to as the doubling constant associated with $B$.

Our main result, Theorem \ref{thm long asymmetric intro}, reflects the maxim that ``convex functions destroy additive structure'', and that the more convex a function, the more it destroys additive structure.
Its proof arises from generalising Garaev's method to longer sums and more convex sets.
In light of our main theorem, and one can view Theorem \ref{cor:GG} as its corollary by setting $B_1 = \dots = B_k =[N].$ \footnote{For this to be exactly true, one needs to check that all $s$-convex sets are of the form $f([N])$ for some $s$-convex function $f$, for $s>1$.}



\begin{thm}\label{thm long asymmetric intro}
Let $B_1, \dots B_{k}$ be any sets with $|B_i| = N$, $|B_i+B_i-B_i| = K_i N$ for all $1\leq i \leq k$.
With $s\geq 0$ and $k = 2^s$, let $A_i = f_i(B_i)$ for some $s$-convex functions $f_1, \dots, f_k$. 
Then we have
\[ T(A_1, \dots, A_{k})\ll \left (\prod_{i=1}^k K_i^{2-(2+2s-2\alpha_s)2^{-s}} \right) \cdot N^{2^{s+1}-1-s + \alpha_s}, \]
where $\alpha_0 = 0$ and $\alpha_s = \sum_{j=1}^s j2^{-j}.$
\end{thm}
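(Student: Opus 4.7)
The plan is to induct on $s\geq 0$. The base case $s=0$, $k=1$ is trivial: $T(A_1)=|A_1|=N$, matching the stated bound since the $K_1$-exponent $2-2=0$ vanishes. For the inductive step, the structural observation generalising Garaev is that if $f$ is $s$-convex, then for every real $h$ the shift $g^h(x):=f(x+h)-f(x)$ is $(s-1)$-convex in $x$, and since $s\geq 1$ it is in particular strictly monotone in $x$ and thus injective on any subset of $\R$.

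Granted this, parametrise each pair $(a_i,a_i')\in A_i^2$ as $a_i=f_i(x_i+h_i)$, $a_i'=f_i(x_i)$ with $x_i, x_i+h_i\in B_i$; the sign $\epsilon_i=\pm1$ recording whether $a_i>a_i'$ contributes only an $O(1)$ overhead. Writing $B_i^{h_i}:=B_i\cap(B_i-h_i)$, the defining constraint $\sum_i(a_i-a_i')=0$ becomes $\sum_i\epsilon_i g_i^{h_i}(x_i)=0$, so
\[
T(A_1,\dots,A_{2^s})=\sum_{\epsilon}\sum_{(h_i)}\#\Big\{(x_i)\in\textstyle\prod_i B_i^{h_i}:\sum_i\epsilon_i g_i^{h_i}(x_i)=0\Big\},
\]
modulo a lower-order $h_i=0$ diagonal contribution. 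For a balanced sign pattern $\epsilon$ (equal numbers of $\pm$) and fixed $(h_i)$, the inner count splits the equation into two sides of $2^{s-1}$ summands; Cauchy--Schwarz bounds it by the geometric mean of two symmetric $T$-counts on the $(s-1)$-convex images $g_i^{h_i}(B_i^{h_i})$, to which the inductive hypothesis applies, using that $B_i^{h_i}\subseteq B_i$ inherits the absolute doubling bound $|B_i^{h_i}+B_i^{h_i}-B_i^{h_i}|\leq K_iN$. Unbalanced sign patterns are lower order, since monotonicity then pins one variable from the others, giving a bound $\ll N^{2^s-1}$ per $(h_i)$ that is easily absorbed.

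The main obstacle is the final summation over $(h_i)$. After the induction, estimates involve the size $|B_i^{h_i}|$ and the doubling of $B_i^{h_i}$ viewed as its own ambient set; this latter doubling may exceed $K_i$ when $|B_i^{h_i}|\ll N$, forcing dyadic pigeonholing over $|B_i^{h_i}|$. The global identities $\sum_{h_i}|B_i^{h_i}|=N^2$ and $|\{h_i:B_i^{h_i}\neq\emptyset\}|=|B_i-B_i|\ll K_i^2N$ (by Pl\"unnecke--Ruzsa) provide the needed control. The recursion $\alpha_s=\alpha_{s-1}+s\cdot 2^{-s}$ emerges naturally from the Cauchy--Schwarz split, and careful bookkeeping delivers the claimed $K_i$-exponent $2-(2+2s-2\alpha_s)\cdot 2^{-s}$ and $N$-exponent $2^{s+1}-1-s+\alpha_s$.
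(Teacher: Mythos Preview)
Your approach has a genuine gap: summing over \emph{all} shifts $h_i\in B_i-B_i$ is too lossy, and the two global identities you invoke cannot repair it. Take already the case $s=1$, $k=2$, with $B_1=B_2=[N]$ and $f_1=f_2$ convex (so $K_1,K_2\asymp 1$). Your decomposition gives
\[
E(A)\ll\sum_{h_1,h_2\neq 0}\#\bigl\{(x_1,x_2):g^{h_1}(x_1)=g^{h_2}(x_2)\bigr\},
\]
and for fixed $(h_1,h_2)$ the inner count is at most $\min(|B^{h_1}|,|B^{h_2}|)\leq |B^{h_1}|^{1/2}|B^{h_2}|^{1/2}$. Using $\sum_h|B^h|=N^2$ and $|B-B|\ll N$ together with Cauchy--Schwarz yields $\sum_h|B^h|^{1/2}\ll N^{3/2}$, hence $E(A)\ll N^3$, which is the trivial bound, not the required $N^{5/2}$. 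No amount of dyadic pigeonholing on $|B^{h_i}|$ improves this, because the constraints you list are essentially sharp for $B=[N]$. The recursion $\alpha_s=\alpha_{s-1}+s2^{-s}$ does \emph{not} emerge from Cauchy--Schwarz alone.

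What is missing is precisely the Garaev/lucky-pairs mechanism the paper uses: one first stratifies the energy by the level sets $X_r=\{x:r\leq r_{A_1+\cdots+A_k}(x)<2r\}$, and for each $x\in X_r$ a pigeonhole argument (Proposition~\ref{prop lucky pairs most general}) shows that among the $r$ representations of $x$ there are $\gg r$ pairs whose $i$th coordinates differ by an element $h_i$ with only $\ll K_iN/r^{1/(k-1)}$ possible values (Lemma~\ref{small doubling lemma}). It is this restriction of the $h_i$-range, depending on $r$, that yields $|X_r|\ll\bigl(\prod_iK_iN\bigr)r^{-(2k-1)/(k-1)}\mathcal T_{k/2}$; one then optimises $\sum_r r^2|X_r|$ against the trivial bound $r|X_r|\leq N^k$. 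The exponent $1-1/k$ on $\mathcal T_{k/2}$ produced by this optimisation is exactly what drives the stated recursion for $\alpha_s$. Without the level-set decomposition your outer sum over $h_i$ is a factor of roughly $r^{k/(k-1)}$ too long at every scale, and the argument cannot close.
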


Due to the generality of Theorem \ref{thm long asymmetric intro}, it yields useful corollaries. The following follows by setting all the $B_i$ and all the $f_i$ to be the same.

\begin{cor}\label{corollary energy small doubling intro}
Let $B$ be any set with $|B| = N$ and $|B+B-B| = KN$. If $A:=f(B)$ where $f$ is an $s$-convex function and $k = 2^s$, then 
\[ T_k(A) \ll K^{2^{s+1} - 2 - 2s + 2\alpha_s} \cdot N^{2^{s+1}-1-s + \alpha_s}, \]
where $\alpha_0 = 0$ and $\alpha_s = \sum_{j=1}^s j2^{-j}.$

Thus
\begin{equation}
    \label{est:card}
|\underbrace{A \pm A \pm \dots \pm A}_{k \text{ times}}| \gg K^{-2^{s+1} + 2 + 2s - 2\alpha_s} N^{1 +s - \alpha_s}\,. 
\end{equation}
\end{cor}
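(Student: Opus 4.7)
The plan is to derive both bounds as direct specializations of Theorem \ref{thm long asymmetric intro}, followed by a standard Cauchy--Schwarz step for the sumset bound. Since the corollary is a straightforward consequence of the main theorem, there is no substantive new ingredient here---the only work is bookkeeping the exponents.

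First I would apply Theorem \ref{thm long asymmetric intro} with $B_1 = \dots = B_k = B$ and $f_1 = \dots = f_k = f$, so that $A_i = A$ and the doubling constant $K_i$ equals $K$ for every $i$. The hypotheses of the theorem are satisfied verbatim, and its conclusion produces
\[ T_k(A) \ll K^{k \left(2 - (2+2s-2\alpha_s)2^{-s}\right)} \cdot N^{2^{s+1}-1-s+\alpha_s}. \]
A quick simplification using $k = 2^s$ reduces the exponent of $K$ to $2 \cdot 2^s - (2+2s-2\alpha_s) = 2^{s+1} - 2 - 2s + 2\alpha_s$, giving the first claim.

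For the sumset bound \eqref{est:card}, I would fix any choice of signs and set $S := A \pm A \pm \dots \pm A$. Let $r(x)$ denote the number of tuples $(a_1,\ldots,a_k) \in A^k$ with $\pm a_1 \pm \dots \pm a_k = x$. Then $\sum_{x \in S} r(x) = N^k$, while $\sum_{x} r(x)^2$ counts $2k$-tuples with equal signed sums on both sides; after absorbing the minus signs into the relevant copies of $A$ and using that $T_k$ is invariant under negation of any argument, this second sum is precisely $T_k(A)$. The Cauchy--Schwarz inequality then yields
\[ N^{2k} = \left(\sum_{x \in S} r(x)\right)^{\!2} \leq |S| \cdot T_k(A), \]
so $|S| \gg N^{2k}/T_k(A)$, and inserting the bound for $T_k(A)$ established above produces $|S| \gg K^{-2^{s+1}+2+2s-2\alpha_s}\, N^{1+s-\alpha_s}$, as desired.

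There is essentially no obstacle; the only thing to watch is the arithmetic with exponents, in particular that the $K_i$-factor from Theorem \ref{thm long asymmetric intro} is raised to the power $k$ when all the doubling constants coincide, and that the resulting exponent matches both sides of \eqref{est:card} after applying Cauchy--Schwarz.
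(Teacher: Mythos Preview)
Your proposal is correct and matches the paper's own approach exactly: the paper states that the corollary ``follows by setting all the $B_i$ and all the $f_i$ to be the same'' in Theorem~\ref{thm long asymmetric intro}, and the sumset bound \eqref{est:card} is the standard Cauchy--Schwarz consequence (the same one the paper invokes for Corollary~\ref{cor:size}). Your exponent arithmetic and your remark that $T_k$ is unchanged under negation of any argument are both fine.
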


It should be mentioned that such bounds which depend on doubling constants can be used to obtain sum-product-type results, pioneered by Erd\"os and Szemer\'edi \cite{ErdosSzemeredi1983}; for longer sums and products see \cite{BourgainChang2004}. We will not however explicitly discuss sum-product phenomena further.
Other sum-product type results in the context of convex sets can be seen in recent work of Stevens and Warren \cite{StevensWarren2021}.

We also prove the following asymmetric energy bound:

\begin{thm}\label{short asymmetric intro}
Let $B,C$ be sets with $|B| = N$, $|B+B-B| = KN$ and $|C| = L$.
If $A := f(B)$ for some convex function $f$, then
\[ E(A,C) \ll K^{1/2} N L^{3/2}. \]
\end{thm}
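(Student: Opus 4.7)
The plan is to open up $E(A,C) = \sum_{b_1, b_2 \in B} r_{C-C}(f(b_1)-f(b_2))$ and parametrize by the $B$-shift $s = b_1 - b_2$. Writing $g_s(b) := f(b+s) - f(b)$ and $B_s := B \cap (B-s)$, this becomes
\[
E(A,C) = \sum_{s \in B-B} \sum_{b \in B_s} r_{C-C}(g_s(b)).
\]
Convexity of $f$ enters via the Garaev-style observation that, for each fixed $s$, the function $g_s$ is strictly monotone on $B_s$, so its image is a set of $|B_s|$ distinct reals.

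Next, for each $s$ I would apply Cauchy--Schwarz together with the trivial bound $E(C) \leq L^3$ to get
\[
\sum_{b \in B_s} r_{C-C}(g_s(b)) \leq |B_s|^{1/2} E(C)^{1/2} \leq L^{3/2} |B_s|^{1/2},
\]
reducing matters to a bound on $\sum_{s \in B-B} |B_s|^{1/2}$. The tripling hypothesis $|B+B-B| = KN$ enters at this stage: after translating so $0 \in B$, one has $B - B \subseteq B+B-B$, so $|B-B| \leq KN$, and Pl\"unnecke--Ruzsa gives the same up to absolute constants. Combined with $\sum_s |B_s| = N^2$, a Cauchy--Schwarz then provides a first estimate on $\sum_s |B_s|^{1/2}$.

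The main obstacle is the last step. A single Cauchy--Schwarz only yields $\sum_s |B_s|^{1/2} \ll K^{1/2} N^{3/2}$, which is in fact sharp when $B$ is an AP, and this only produces the weaker $E(A,C) \ll K^{1/2} N^{3/2} L^{3/2}$. To recover the sharper $K^{1/2} N L^{3/2}$ claimed in the theorem, one must not separate the $A$- and $C$-sides so early. Instead I would try a bilinear Cauchy--Schwarz, coupling the inner $b$-sum with a second use of the convexity of $f$ (analogous to the amplification step in Garaev's proof of $E(A) \ll N^{5/2}$), so that the total count inherits the $L^{3/2}$ from the trivial energy bound on $C$ while also saving an extra $N^{1/2}$ from the monotonicity of $g_s$ and the small tripling of $B$.
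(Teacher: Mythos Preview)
Your decomposition and the Cauchy--Schwarz step are correct, and you are right that they only yield $E(A,C)\ll K^{1/2}N^{3/2}L^{3/2}$, a full factor of $N^{1/2}$ away from the target. The problem is that the last paragraph is not a proof: ``try a bilinear Cauchy--Schwarz'' and ``save an extra $N^{1/2}$ from monotonicity of $g_s$'' do not name any concrete mechanism, and in fact there is no further saving available along the lines you sketch. Once you bound $\sum_{b\in B_s} r_{C-C}(g_s(b))^2$ by $E(C)$ you have thrown away all interaction between the $A$-side and the $C$-side, and the remaining sum $\sum_s |B_s|^{1/2}$ genuinely is of order $N^{3/2}$ when $B=[N]$. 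The missing $N^{1/2}$ cannot be recovered after that separation.

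The paper's argument is structurally different. It does not parametrise by the $B$-shift alone; instead it dyadically decomposes $A+C$ into level sets $X_r=\{x: r\le r_{A+C}(x)<2r\}$ and applies the lucky-pairs pigeonholing of Proposition~3.1 with $k=2$, $B_1=B$, $B_2=[L]$. The point of this step is that for each $x\in X_r$ one finds $\gg r$ pairs of representations which are close \emph{both} in the $B$-coordinate (so that $b_1-b_2$ takes only $\ll KN/r$ values, via Lemma~5.1 and the tripling hypothesis) \emph{and} in the $C$-index (so that the $C$-index shift takes only $\ll L/r$ values). After fixing both shifts and one $C$-element, monotonicity of $g_s$ pins down the solution, giving $|X_r|\ll KNL^2/r^3$. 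Summing $r^2|X_r|$ and balancing against the trivial bound at $r_*=(KL)^{1/2}$ yields $K^{1/2}NL^{3/2}$. The essential idea you are missing is this simultaneous constraint on the $C$-side index shift; your approach constrains only the $B$-side, which is why it loses $N^{1/2}$.
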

Our result improves the following result, which was previously the best-known and which follows from a straightforward extension of Konyagin's Szemer\'edi--Trotter proof of Theorem \ref{garaevs energy bound}.
The improvement is in the dependence on $K$. 

\begin{thm}\label{weaker short energy}
Let $B,C$ be sets with $|B| = N$, $|B-B| = KN$ and $|C| = L$.
If $A := f(B)$ for some convex function $f$, then
\[ E(A,C) \ll K N L^{3/2}. \]
\end{thm}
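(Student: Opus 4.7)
The plan is to follow the Szemer\'edi--Trotter-based approach that Konyagin used to establish Theorem \ref{garaevs energy bound}, with the convexity of $f$ supplying the required incidence structure and the small-doubling hypothesis $|B-B| = KN$ contributing the $K$ dependence.

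First I would expand
\[
E(A,C) = \sum_d r_{A-A}(d)\,r_{C-C}(d),
\]
and perform a dyadic decomposition on the size of $r_{A-A}(d)$. Setting $R_\tau := \{d : r_{A-A}(d) \asymp \tau\}$, I pick a dyadic $\tau$ at which the contribution is maximal, so that
\[
E(A,C) \lesssim \tau \sum_{d \in R_\tau} r_{C-C}(d).
\]
Cauchy--Schwarz and the trivial bound $E(C) \leq L^3$ then yield
\[
E(A,C) \lesssim \tau\,|R_\tau|^{1/2}\,L^{3/2},
\]
which reduces the problem to bounding $\tau |R_\tau|^{1/2}$.

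The decisive step is an incidence estimate for $|R_\tau|$ coming from Szemer\'edi--Trotter. The strict monotonicity of $f'$ implies that the map $(b,b') \mapsto (b-b',\,f(b)-f(b'))$ is injective, so the point set
\[
\mathcal{P} := \{(b-b',\,f(b)-f(b')) : (b,b') \in B \times B\}
\]
has cardinality $N^2$ and lies in the grid $(B-B)\times (A-A)$, whose first factor has size $KN$. For each $d \in R_\tau$ the horizontal line $y = d$ meets $\gtrsim \tau$ points of $\mathcal{P}$, and the curves $\gamma_b = \{(h, f(b+h)-f(b))\}$ indexed by $b \in B$ form a pseudoline system (two such curves differ by a monotone function). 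Running Szemer\'edi--Trotter on this configuration, with the horizontal range restricted to $B-B$ of size $KN$, and combining the result with the trivial bound $\tau |R_\tau| \leq N^2$, I expect to obtain the estimate $\tau\,|R_\tau|^{1/2} \ll KN$. Substituting this into the earlier inequality gives $E(A,C) \ll KN\,L^{3/2}$.

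The main obstacle is this last incidence estimate: extracting a factor that is linear in $K$ (rather than, say, $K^{3/2}$ or $K^2$) requires a careful balance between the Szemer\'edi--Trotter bound, which becomes effective for small $\tau$ precisely because of the horizontal constraint imposed by $|B-B|=KN$, and the trivial counting bound $|R_\tau|\tau \leq N^2$, which takes over for the large-$\tau$ regime. Getting the better exponent $K^{1/2}$ as claimed in the stronger Theorem \ref{short asymmetric intro} will presumably require more than this Szemer\'edi--Trotter application and is where the paper's improvement enters; here we are content with the linear dependence on $K$.
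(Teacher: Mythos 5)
Your opening reductions are fine, but they commit you to a statement that is far too strong. After expanding $E(A,C)=\sum_d r_{A-A}(d)r_{C-C}(d)$, pigeonholing onto $R_\tau$ and applying Cauchy--Schwarz to $C$, you must prove $\tau|R_\tau|^{1/2}\ll KN$, i.e. $|R_\tau|\ll K^2N^2/\tau^2$, for every dyadic $\tau$ (the maximal $\tau$ depends on $C$, which is arbitrary). Summing $\tau^2|R_\tau|$ over dyadic $\tau$, this would give $E(A)\lesssim K^2N^2$; taking $B=[N]$, so $K\ll 1$, it would yield $E(A)\lesssim N^2$ for \emph{every} convex set, which is far beyond the best known bound $E(A)\lesssim N^{32/13}$ recorded in Theorem \ref{th:sumset bounds} and would settle a well-known open problem. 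So no application of Szemer\'edi--Trotter can deliver the estimate you need. Concretely, in the configuration you describe ($N^2$ points $(b-b',f(b)-f(b'))$, the $N$ curves $\gamma_b$, and the $|R_\tau|$ horizontal lines), Szemer\'edi--Trotter gives at best $\tau|R_\tau|\ll N^{4/3}|R_\tau|^{2/3}+N^2+|R_\tau|$, i.e. $|R_\tau|\ll N^4/\tau^3+N^2/\tau$ --- third-moment-type information in which $K$ never appears; together with the trivial bound $\tau|R_\tau|\leq N^2$ (which only covers $\tau\ll K^2$) this leaves an entire range of $\tau$ uncontrolled. The structural problem is that Cauchy--Schwarz decouples $A$ from $C$ too early, so the whole burden falls on an $L^2$-type bound for the rich differences of $A$ alone, and that is precisely the hard open question.

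The workable route --- both for the Szemer\'edi--Trotter argument alluded to in the paper and for the paper's elementary proof of the stronger Theorem \ref{short asymmetric intro} --- keeps $A$ and $C$ together: decompose dyadically over the $r$-rich sums $X_r\subseteq A+C$, prove $|X_r|\ll K^2NL^2/r^3$ (for the $K$-dependence claimed here; the paper's lucky-pairs argument improves this to $KNL^2/r^3$) by an incidence count in which the constraint on $b-b'\in B-B$ and the constraint on consecutive elements of $C$ enter simultaneously, and then sum $r^2|X_r|$ against the trivial bound $|X_r|\ll NL/r$, optimising at $r_*=KL^{1/2}$. You would need to rebuild your argument along these lines; the present one cannot be completed.
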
 

Indeed, the improved Theorem \ref{short asymmetric intro} is sharp when $|A| = |C|$.
Let $A = C = f(B)$ where $B = \{x^2: x \in [N]\}$ and $f(x):= \sqrt{x}$. Then we get $E(A,C) = K^{1/2}|A||C|^{3/2} = N^3$.

\medskip
The utility of incorporating higher convexity into the main results is as follows. For $1$-convex sets, the best known estimates for $T_{k}(A)$, have been derived in \cite{IosevichKonyaginRudnevTen2006} using induction and Szemer\'edi-Trotter bounds, namely
\begin{equation}\label{energy IKRT}
    T_{k}(A)\ll N^{2k - 2+ 2^{-(k-1)}}\,.
\end{equation}
This in particular implies that $|kA|\gg N^{2-2^{-(k-1)}}$.
This estimate cannot be improved beyond $N^2$, as evinced by the first $N$ squares, which form a $1$-convex (but not $2$-convex) set. Consequently, the energy bound \eqref{energy IKRT} is almost best-possible. One would naturally expect better estimates for more convex sets, but the choice techniques based on the Szemer\'edi--Trotter theorem have not been adapted to allow for nontrivial estimates. A different elementary technique has been developed in \cite{HansonRoche-NewtonRudnev2020} to account for the following growth of particular sumsets:
\[ |2^s A - (2^s-1)A| \gg |A|^{s+1}. \]

In this paper we show that Garaev's idea underlying his proof of Theorem \ref{garaevs energy bound}, allows for upper bounds for the quantities $T_{2^s}$, which are in line with the sumset bounds in \cite{HansonRoche-NewtonRudnev2020}.

Note that bounds for $T_k(A)$ can be immediately recycled into $L^{2k}$ bounds for exponential sums over $A$ (see \cite{IosevichKonyaginRudnevTen2006} and the references contained therein). 

In the last section of this paper we use our estimates for the quantities $T_3(A)$ and $T_4(A)$ to get small improvements on the best known sumset and energy estimates for sets which are $s$-convex, for $s\geq 2$. Although our quantitative improvements are quite modest (and most likely not best possible within the technology we present) they do break the ice in some sense, for previously used methods based on the Szemer\'edi-Trotter theorem did not enable one to benefit by using higher convexity.

\subsection{Organisation of this paper}
Section \ref{prelims} contains the notation used herein which relates to energy and difference sets. It also briefly discusses the relevant properties of convex sets and convex functions.
Section \ref{methods} introduces the Garaev argument and a generalised definition of ``lucky pairs''. The key result is Proposition \ref{prop lucky pairs most general}, which forms the framework for proving the main results of the paper.
Sections \ref{proofs1} and \ref{proofs2} respectively contain the proofs of the main results in the convex and near-convex cases.
Section \ref{applications} is devoted to proving new sumset results for $s$-convex functions where $s\geq 2$.

\section{Preliminaries and Notation}\label{prelims}

Throughout this paper, if $x \in A_1 + \dots + A_k$, then \[r_{A_1+ \dots + A_k}(x) := |\{(a_1,\dots,a_k)\in\prod_{i=1}^k A_i: x = a_1 + \dots + a_k\}|. \]
Also for $r\geq 1$, we define 
\begin{equation}\nonumber X_r:=\{x \in A_1 + \dots + A_k: r\leq r_{A_1+ \dots + A_k}(x) < 2r\}, \end{equation}
the set of {\em $r$-rich sums} in a given sumset. Whenever we use (extensively) the notation $X_r$, context will make it clear what sumset it is contained in.

In this language we can express the energy in either of the following ways:
\[T(A_1,\dots,A_k) = \sum_{x \in A_1+ \dots + A_k} r_{A_1+ \dots + A_k}^2(x),  \]
or
\[T(A_1,\dots,A_k) \approx \sum_{r \text{ dyadic}} r^2|X_r|.  \]

For a convex set $A$ with $N$ elements, and $1\leq h< N$ its $h$-difference set is $$\Delta_h A:=\{a_{i+h}-a_i:\,i\in[N-h]\}\,.$$
We will not explicitly deal with the fact that $|\Delta_h A|<|A|=N,$ but rather simply add $h$ extra elements to $\Delta_h A$ ad hoc.

We also define ``discrete derivatives''. Given a function $f$, let its $h$-derivative be \[ \Delta_h f(x):=f(x+h)-f(x). \]
\begin{lem}
If $f$ is an $s$-convex function, then for any $h$, $\Delta_h f$ is an $(s-1)$-convex function.
\end{lem}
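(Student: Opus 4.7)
The plan is to unpack the definition of $s$-convex function given in the paper and verify directly that the $j$-th derivatives of $\Delta_h f$ are strictly monotone for $j = 0, 1, \dots, s-1$. Recall that $f$ being $s$-convex means $f \in C^s(\mathbb{R})$ and each of the derivatives $f^{(0)}, f^{(1)}, \dots, f^{(s)}$ is strictly monotone.

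First I would observe that since $\Delta_h$ is a linear operator and $f \in C^s(\mathbb{R})$, we immediately have $\Delta_h f \in C^s(\mathbb{R}) \subset C^{s-1}(\mathbb{R})$, and differentiation commutes with $\Delta_h$ in the sense that
\[
(\Delta_h f)^{(j)}(x) = f^{(j)}(x+h) - f^{(j)}(x), \qquad j = 0, 1, \dots, s.
\]
This is just the linearity of the derivative together with smoothness. So to prove $(s-1)$-convexity of $\Delta_h f$ it suffices to show that, for each $0 \leq j \leq s-1$, the function $x \mapsto f^{(j)}(x+h) - f^{(j)}(x)$ is strictly monotone on $\mathbb{R}$.

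The key step is to differentiate this once more: its derivative with respect to $x$ is $f^{(j+1)}(x+h) - f^{(j+1)}(x)$. Since $j+1 \leq s$, the hypothesis of $s$-convexity guarantees that $f^{(j+1)}$ is strictly monotone. Consequently, for fixed $h \neq 0$ the quantity $f^{(j+1)}(x+h) - f^{(j+1)}(x)$ is of one constant sign (positive for all $x$ if $f^{(j+1)}$ is strictly increasing and $h>0$, and so on), and moreover is nonzero everywhere. By the standard calculus criterion, a $C^1$ function with derivative of constant nonzero sign is strictly monotone, which yields the desired strict monotonicity of $(\Delta_h f)^{(j)}$.

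Assembling these observations, $\Delta_h f$ is in $C^{s-1}(\mathbb{R})$ with derivatives of orders $0$ through $s-1$ all strictly monotone, i.e., $\Delta_h f$ is $(s-1)$-convex. There is no real obstacle here; the only minor point worth noting is that the argument uses the $(j{+}1)$-st derivative of $f$ to control the $j$-th derivative of $\Delta_h f$, which is precisely why passing from $s$-convex to $(s-1)$-convex (rather than to $s$-convex) is the correct statement, and why one cannot iterate indefinitely without additional smoothness on $f$.
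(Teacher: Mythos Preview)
Your proof is correct and rests on the same core observation as the paper's, namely that differentiation commutes with $\Delta_h$ so that $(\Delta_h f)^{(j)}=\Delta_h(f^{(j)})$, together with the fact that a strictly monotone $f^{(j+1)}$ forces $\Delta_h(f^{(j)})$ to be strictly monotone. The only cosmetic difference is that the paper packages this as an induction on $s$ (with an integral-representation base case), whereas you unroll it and verify all orders $0\le j\le s-1$ directly via the sign-of-derivative criterion; both arguments are equally short and there is no substantive gap.
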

\begin{proof}
We use induction on $s$. Suppose $f$ is $1$-convex.
We have 
\[ \Delta_h f(x) := f(x+h)-f(x) = \int_x^{x+h} f'(y)dy. \]
Since $f'$ is monotone, it follows that $\Delta_h f$ is also monotone, and hence $0$-convex. 

Next assume the statement holds for $(s-1)$-convex functions. 
Let $f$ be an $s$-convex function.
By definition, this implies that $f'$ is an $(s-1)$-convex function.
The induction hypothesis implies that $\Delta_h(f')$ is an $(s-2)$-convex function. But since $\Delta_h(f') = (\Delta_h f)'$, it follows that $\Delta_h f$ is $(s-1)$-convex, completing the induction. 
\end{proof}

A one-way relationship between $s$-convex functions and $s$-convex sets is intuitive and is summarised in the following lemma.

\begin{lem}\label{lem:convconv}
If $f$ is an $s$-convex function, then $f([N])$ is an $s$-convex set. 
\end{lem}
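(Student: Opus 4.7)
The plan is to proceed by induction on $s$, using the preceding lemma (that $\Delta_h f$ is $(s-1)$-convex whenever $f$ is $s$-convex) as the engine.

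For the base case $s=0$, a $0$-convex function is simply a monotone $C^0$ function, so $f([N])$ consists of $N$ distinct reals, which, written in monotone order, is by definition a $0$-convex set.

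For the inductive step, assume the statement holds for $(s-1)$-convex functions and let $f$ be $s$-convex. Since $f'$ exists and is monotone (being $(s-1)$-convex and hence in particular $0$-convex), $f$ itself is strictly monotone on $[N]$, so $A:=f([N])=\{f(1),\ldots,f(N)\}$ is a set of $N$ distinct reals in monotone order. Its set of consecutive differences is
\[
\{f(i+1)-f(i):1\leq i\leq N-1\}=(\Delta_1 f)([N-1]).
\]
By the preceding lemma, $\Delta_1 f$ is an $(s-1)$-convex function, so by the inductive hypothesis, $(\Delta_1 f)([N-1])$ is an $(s-1)$-convex set. By the definition of $s$-convexity, this means $A$ is $s$-convex, completing the induction.

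The main thing to check carefully is the inductive step of the preceding lemma as applied here: one needs $\Delta_1 f$ to have sufficiently many monotone derivatives to qualify as an $(s-1)$-convex function in the sense defined in the paper. This is routine, because differentiation commutes with the discrete difference, $(\Delta_1 f)^{(j)}=\Delta_1(f^{(j)})$, and the integral representation $\Delta_1 g(x)=\int_x^{x+1}g'(y)\,dy$ turns monotonicity of $g'$ into monotonicity of $\Delta_1 g$. There is no real obstacle; the only subtlety is purely bookkeeping, namely ensuring that the $N$ values $f(1),\ldots,f(N)$ really are distinct (which follows from strict monotonicity of $f$) and that their differences really form a set rather than a multiset (which follows from strict monotonicity of $\Delta_1 f$, i.e., from $1$-convexity of $f$).
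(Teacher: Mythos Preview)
Your proof is correct and follows essentially the same approach as the paper's own proof: induction on $s$, with the inductive step reducing to the fact that $\Delta_1 f$ is $(s-1)$-convex. Your version is slightly more careful (noting that the differences live on $[N-1]$ and verifying distinctness of the $f(i)$), but the argument is the same.
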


\begin{proof}
By induction on $s$. If $f$ is a $0$-convex function, then $f([N])$ is clearly ordered as a $0$-convex set.

Assume the statement holds for $(s-1)$-convex functions. Let $f$ be an $s$-convex function. Then $(\Delta_1 f)(x) := f(x+1)-f(x)$ is an $(s-1)$-convex function. By the induction hypothesis, $(\Delta_1 f)([N])$ is an $(s-1)$-convex set, which proves that $f([N])$ is an $s$-convex set, completing the induction. \end{proof}
We expect the converse to the statement of Lemma \ref{lem:convconv} to be true, as it is for $s=1$. However, proving this may require interpolation techniques beyond the scope of this paper.





\section{Methods}\label{methods}

We begin by presenting a version of Garaev's proof of Theorem \ref{garaevs energy bound}. This proof is essentially synthesised from its exposition by  Olmezov \cite{Olmezov2020}, with an additional observation that convexity can be used more sparingly, which enables one to extend the estimate for $E(A)$ to $E(A,B)$, where $A$ is a convex set and $B$ is any set. This is based on replicating estimate \eqref{simple X_r with B} below, known earlier via the Szemer\'edi-Trotter theorem. 

In the forthcoming argument, we only need the following property of a  {\em convex} set 
$$A=\{a_1, a_2,\ldots a_N\},
$$
written in increasing order: For each $h<N$, the differences $a_{i+h}-a_i,\,i=1,\ldots,N-h$ are all distinct.

\begin{proof}[Proof of Theorem \ref{garaevs energy bound}]
We are estimating the number of solutions to
\begin{equation}\label{2energy}
a_{i_1} + a_{j_1} = a_{i_2} + a_{j_2} \,:(a_{i_1},a_{j_1},a_{i_2},a_{j_2})\in A^4\,.
\end{equation}

Consider some $x \in X_r$.
Recall that this means $r\leq r_{A+A}(x)<2r$ where $r_{A+A}(x)$ is the number of realisations of $x$ as a sum of two elements in $A$.
 Write
\[ x = a_{i_1} + a_{j_1} = \dots = a_{i_r} + a_{j_r}, \]
with  $i_1<i_2<\ldots<i_r$. Since $a_{i_u} + a_{j_u} = x$ for all $u$, we also have $j_1>j_2>\ldots >j_r$. 
At the cost of a multiplicative constant, we may also assume that $j_u \geq i_u$ for all $u$.
It follows that 
\[ \sum_{u=1}^{r-1} (i_{u+1}-i_u) \leq N \qquad \text{and} \qquad \sum_{u=1}^{r-1} (j_{u}-j_{u+1}) \leq N. \]
By the pigeonhole principle, at least $3r/4$ of the summands in each sum cannot exceed $4N/r$. This implies that there is a set of indices $U \subset [r-1]$ with $|U|\geq r/2$ such that for every $u\in U$, $i_{u+1}-i_u \leq 4N/r$ and $j_{u}-j_{u+1}\leq 4N/r$. For $u \in U$, we say the pair $(a_{i_u},a_{j_u}), (a_{i_{u+1}},a_{j_{u+1}})$ is a \emph{lucky pair}, so there are at least $r/2$ lucky pairs.

Since each lucky pair gives rise to a solution to the energy equation \eqref{2energy},
there are  least $r/2$ distinct solutions of the equation
$$
a_{i_1+h_1} - a_{i_1} = a_{i_2+h_2} - a_{i_2}\,,
$$
where $i_1,i_2\in [N]$ and $1\leq h_1,h_2\leq 4N/r$.

By considering all $x \in X_r$, it follows that 
\[
r|X_r| \ll (N/r)^2 \max_{1\leq h_1,h_2\ll N/r}|
\{(i_1,i_2) \in [N]^2:\, a_{i_1+h_1} - a_{i_1} = a_{i_2+h_2} - a_{i_2}\}|\,. 
\]
Now comes the only part of the argument where we use the convexity of $A$: given $h_1$, all differences $a_{i_1+h_1} - a_{i_1}$ are distinct, hence for any fixed $h_1$ and $h_2$, we have trivially that
\begin{equation}\label{e:induct_triv}
|\{(i_1,i_2) \in [N]^2:\, a_{i_1+h_1} - a_{i_1} = a_{i_2+h_2} - a_{i_2}\}|\leq N\,.
\end{equation}
It follows that
\begin{equation}\label{simple X_r bound}
   |X_r| \ll N^3/r^3. 
\end{equation}
We write $E(A) = \sum_{r \text{ dyadic}} r^2|X_r|$ and, for some parameter $r_*$ to be chosen, use the trivial bound $|X_r| \ll N^2/r$ for $r \leq r_*$ and estimate \eqref{simple X_r bound} for $r > r_*$.
Choosing the optimal $r_* = N^{1/2}$ yields the desired
\[ E(A) \ll N^{5/2}. \qedhere \]
\end{proof}
We remark that since the lucky pairs argument itself involves solely the pigeonhole principle and no assumptions on the set $A$, the above proof generalises immediately to the case of $E(A,B),$ where $A$ is convex and $B$ any set. Bound \eqref{simple X_r bound} becomes
\begin{equation}\label{simple X_r with B}
|X_r| \ll |A||B|^2/r^3\,,
\end{equation}
with $X_r$ now being the set of $r$-rich sums in $A+B$. 
Indeed, the only necessary changes to the proof are that now $h_2$ pertain to the set 
$$B=\{b_1,b_2,\ldots, b_{|B|}\},\,$$ so that $1\leq h_2\ll |B|/r$, and the trivial bound \eqref{e:induct_triv} is replaced by $|B|$. This takes into account that given $h_2$, the quantities $b_{i_2+h_2} - b_{i_2}$ are not necessarily all distinct. What matters is that $a_{i_1+h_1} - a_{i_1}$ are all distinct.

Hence, Garaev's method enables one to obtain the standard corollary of estimate \eqref{simple X_r with B}, which is usually proved using the Szemer\'edi-Trotter theorem.

\begin{cor} If $A$ is a convex set, then for any $B$,
$$\begin{array}{rrl} E_3(A,B):= \sum_x r^3_{A\pm B}(x) &\ll& |A|B|^2\log|A|\,, \\
E_{1+p}(A,B):= \sum_x r^{1+p}_{A\pm B}(x) &\ll& |A||B|^{1+p/2}\,,\;\mbox{ for } 1<p<2\,.\end{array}
$$ \label{cor:ST}
\end{cor}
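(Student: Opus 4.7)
The plan is to derive both estimates by a standard dyadic decomposition, using the bound \eqref{simple X_r with B} on $|X_r|$ together with the trivial bound $|X_r| \ll |A||B|/r$, and balancing the two at the critical scale $r_\ast = |B|^{1/2}$.

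First I would write, for any exponent $q \geq 1$,
\[ E_q(A,B) = \sum_x r_{A\pm B}^q(x) \approx \sum_{r \text{ dyadic}} r^q |X_r|, \]
where $r$ ranges over dyadic values from $1$ up to at most $\min(|A|,|B|)$. The two available estimates for $|X_r|$ are comparable when $r = |B|^{1/2}$: for $r \leq |B|^{1/2}$ the trivial bound $|X_r| \ll |A||B|/r$ is sharper, while for $r > |B|^{1/2}$ the convexity bound $|X_r| \ll |A||B|^2/r^3$ from \eqref{simple X_r with B} is sharper. So I would split the dyadic sum at $r_\ast = |B|^{1/2}$ and use each estimate in its favorable range.

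For the exponent $q = 1+p$ with $1 < p < 2$, this produces
\[ E_{1+p}(A,B) \ll |A||B| \sum_{r \leq |B|^{1/2}} r^p \; + \; |A||B|^2 \sum_{r > |B|^{1/2}} r^{p-2}, \]
where both sums are over dyadic $r$. Because $p > 0$ the first dyadic sum is a geometric series dominated by its largest term, contributing $|B|^{p/2}$; because $p < 2$ the second is a decreasing geometric series dominated by its first term, contributing $|B|^{(p-2)/2}$. Both contributions combine to give the desired $|A||B|^{1+p/2}$.

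For $E_3(A,B)$ the situation is the same except that now $q = 3$ corresponds to the boundary exponent $p = 2$, so the second dyadic series becomes $\sum 1$ rather than geometric, and accumulates a logarithmic factor $\log |A|$ (or $\log |B|$) from the number of dyadic scales in $(|B|^{1/2}, \min(|A|,|B|)]$; the first part is handled exactly as above and also contributes $|A||B|^2$. No genuine obstacle arises: the only small care needed is to check the endpoint behavior of the two dyadic series, so that the logarithm shows up only for $p = 2$ and is absent for $1 < p < 2$.
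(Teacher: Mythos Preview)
Your proposal is correct and is exactly the standard dyadic argument the paper has in mind: the paper does not spell out a proof of this corollary, merely stating it as ``the standard corollary of estimate \eqref{simple X_r with B}'', and your splitting at $r_\ast=|B|^{1/2}$ with the trivial bound below and \eqref{simple X_r with B} above is precisely that standard derivation.
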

Earlier expositions of Garaev's method appear to overlook the fact that it generalises easily to embrace two different sets $A$ and $B$, owing to an overreliance on convexity in the proof.

\medskip
In order to generalise Theorem \ref{garaevs energy bound} to the quantity $T_{2^k}(A)$, we need to generalise the concept of lucky pairs from above. In order not to repeat ourselves, we do it in the general setting, suitable for all the results in this paper. In the convex set setting, $B_1, \ldots, B_k$ below are all just the interval $[N]$.
In the near-convex setting, the full generality of Definition \ref{defn lucky pairs most general} and Proposition \ref{prop lucky pairs most general} will be needed.


\begin{define}
[Lucky Pairs]\label{defn lucky pairs most general}
For $1 \leq i \leq k$, suppose $B_i$ is a finite set of real numbers, $g_i$ is a monotone function and $A_i = g_i(B_i)$. 
Given any $r$, where $r^{1/(k-1)} \ll |B_i+B_i-B_i|$ for all $1\leq i\leq k$,
let \[X_r=\{x\in A_1+\cdots+A_k:r\leq r_{A_1+\cdots+A_k}(x)< 2r\}\] be the $r$-rich sums in $A_1 + \dots + A_k$.
Suppose $P:=(b_1,\dots,b_k)$ and $P':=(b_1',\dots,b_k')$ are \emph{distinct} points, each belonging to $\prod_{i=1}^k B_i$. For $x \in X_r$, we say $(P,P')$ forms a \emph{lucky pair} associated with $x$ if the following two conditions hold.
\begin{enumerate}
    \item The pair $(P,P')$ gives rise to a solution to the energy equation for the sum $x$. That is,
    \[ g_1(b_1) + \dots + g_k(b_k) = x = g_1(b_1') + \dots + g_k(b_k'). \]
    \item In all coordinates, there are not many elements of $B_i+B_i-B_i$ between $P$ and $P'$. That is, if $n_{B_i}(b,b')$ is the number of elements of $B_i+B_i-B_i$ lying in $(b,b']$ (or $(b',b]$ if $b>b'$), then
    \begin{equation}\label{lucky pair condition}
    n_{B_i}(b_i,b_i') \ll |B_i+B_i-B_i|/r^{1/(k-1)},    
    \end{equation}
    for all $1\leq i \leq k$.
\end{enumerate}
\end{define}

\begin{rmk}
In \eqref{lucky pair condition}, we will always be treating the upper bound on $n_{B_i}(b_i,b_i')$ as an integer.
This is why we insist that $r^{1/(k-1)} \ll |B_i+B_i-B_i|$ in Definition \ref{defn lucky pairs most general}.
For all the results in this paper, this condition will hold trivially, so it will not be discussed further.
\end{rmk}

\begin{prop}\label{prop lucky pairs most general}
Let $r,k\geq 1$ and for $1 \leq i \leq k$, suppose $B_i$ is a finite set of real numbers, $g_i$ is a monotone function and $A_i = g_i(B_i)$. Let \[X_r=\{x\in A_1+\cdots+A_k: r \leq r_{A_1+\cdots+A_k}(x)< 2r\}\]
be the $r$-rich sums in $A_1 + \dots + A_k$. Then for each $x\in X_r$, there are $\gg r$ lucky pairs associated with $x$.
\end{prop}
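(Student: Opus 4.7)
The plan is to generalise the pigeonhole argument from the proof of Theorem \ref{garaevs energy bound} (the $k=2$ case) by replacing the one-dimensional sorting used there with a $(k-1)$-dimensional grid partition, and exploiting the sum constraint $\sum_i g_i(b_i) = x$ to reduce the effective dimension of the representation space from $k$ to $k-1$.

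More concretely, for each $i \in \{1, \ldots, k\}$ I would partition $B_i + B_i - B_i$ into $M \sim r^{1/(k-1)}$ consecutive intervals, each containing at most $\lceil |B_i + B_i - B_i|/M \rceil$ elements of $B_i + B_i - B_i$. Any two elements of $B_i$ lying in the same interval then automatically satisfy \eqref{lucky pair condition} in coordinate $i$, up to a constant factor. Since $g_k$ is monotone and $g_k(b_k) = x - \sum_{i<k} g_i(b_i)$, the projection $\pi: (b_1,\ldots,b_k) \mapsto (b_1,\ldots,b_{k-1})$ is injective on the set of representations of $x$, producing at least $r$ distinct points in a $(k-1)$-dimensional grid with $M^{k-1} \sim r$ cells. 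Tuning constants so that the total cell count is bounded by $r/C$ for a sufficiently large constant $C$, a Cauchy--Schwarz argument (equivalently, the convexity of $n \mapsto \binom{n}{2}$) yields $\gg r$ pairs of representations sharing a projection cell, and each such pair is automatically lucky in coordinates $1, \ldots, k-1$.

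The main obstacle, and what I expect to be the most delicate step, is to verify the luckiness condition in the remaining coordinate $k$. For a pair $(P, P')$ sharing a projection cell, the identity
\[ g_k(b_k) - g_k(b_k') = \sum_{i<k}\bigl(g_i(b_i') - g_i(b_i)\bigr) \]
together with the monotonicity of $g_k$ controls the spread of $b_k$ and $b_k'$, because each summand $g_i(b_i') - g_i(b_i)$ is bounded by the $g_i$-width of the shared cell in coordinate $i$. An additional pigeonhole step in coordinate $k$ (partitioning $B_k + B_k - B_k$ analogously and restricting to pairs whose $c_k$-cells are sufficiently close), losing only a constant factor, then ensures that $\gg r$ of the pairs produced above simultaneously satisfy $n_{B_k}(b_k, b_k') \ll |B_k + B_k - B_k|/r^{1/(k-1)}$. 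The main difficulty here is bookkeeping: making sure the various constants across the partitions align so that the final count of lucky pairs remains $\gg r$.
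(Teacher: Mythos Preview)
Your projection argument handles coordinates $1, \ldots, k-1$ cleanly, but the treatment of coordinate $k$ has a genuine gap, not merely a bookkeeping issue. The identity
\[ g_k(b_k) - g_k(b_k') = \sum_{i<k}\bigl(g_i(b_i') - g_i(b_i)\bigr) \]
only controls the distance $|g_k(b_k) - g_k(b_k')|$ in $A_k$, whereas the lucky pair condition \eqref{lucky pair condition} demands control on $n_{B_k}(b_k, b_k')$, the number of elements of $B_k + B_k - B_k$ lying between $b_k$ and $b_k'$. These are unrelated in general: if $g_k$ happens to be nearly flat on some subinterval, a small $A_k$-difference can correspond to an arbitrarily large $n_{B_k}$-count. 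Consequently, your ``additional pigeonhole'' --- partitioning $B_k + B_k - B_k$ into $M \sim r^{1/(k-1)}$ intervals and restricting to pairs with nearby $k$-th cells --- has no reason to cost only a constant factor: the $b_k$ values arising within a single projection cell can a priori spread over up to $M$ such intervals. Showing that this restriction loses only a constant is precisely the substance of the argument, not a detail to be tidied up.

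The paper's proof avoids this asymmetry by partitioning all $k$ coordinates of $B_i + B_i - B_i$ simultaneously and observing that, because each $g_i$ is monotone, the induced partition of $\prod_i A_i$ is still a grid of axis-parallel boxes. The key input is then Lemma~\ref{hyperplane lemma}: the hyperplane $X_1 + \cdots + X_k = x$ meets at most $k M^{k-1}$ of the $M^k$ boxes, proved by an elementary diagonal-counting argument. Taking $M = r^{1/(k-1)}/4$ makes this at most $r/2$, and pigeonholing $r$ representations into $r/2$ boxes yields $\gg r$ pairs sharing a full $k$-dimensional cell, hence lucky in every coordinate at once. This hyperplane lemma is exactly the missing ingredient in your outline; once one has it, there is no need to single out a coordinate for separate treatment.
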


We will need the following lemma in the proof:

\begin{lem}\label{hyperplane lemma}
Suppose we have a $k$-dimensional box in $\R^k$ (a Cartesian product of $k$ orthogonal intervals) which is composed of $r^k$ smaller (nonidentical) boxes (or cells) in an $r\times \dots \times r$ grid. Then any generic hyperplane $H$ (not parallel to any one-dimensional edge of the box) can pass through at most $kr^{k-1}$ cells.
\end{lem}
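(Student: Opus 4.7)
The plan is to prove this by induction on $k$. The base case $k=1$ is trivial: a ``hyperplane'' in $\R$ is a single point, which lies in exactly one of the $r$ cells of the partition, matching $kr^{k-1}=1$.

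For the inductive step, since $H$ is not parallel to the $k$-th coordinate axis, I would write it as the graph $x_k = g(x_1,\ldots,x_{k-1})$ of an affine function $g$. Index the cells by multi-indices $(i_1,\ldots,i_k)\in[r]^k$, so cell $(i_1,\ldots,i_k)$ is the Cartesian product $I_{i_1}^{(1)}\times\cdots\times I_{i_k}^{(k)}$. For each fixed $(k-1)$-tuple $(i_1,\ldots,i_{k-1})$, consider the ``column'' of $r$ cells obtained by letting $i_k$ vary. The hyperplane $H$ meets the cell $(i_1,\ldots,i_k)$ precisely when the interval $I_{i_k}^{(k)}$ overlaps the range of $g$ on the $(k-1)$-dimensional sub-cell $I_{i_1}^{(1)}\times\cdots\times I_{i_{k-1}}^{(k-1)}$, and the number of such $i_k$ equals $1$ plus the number of interior $x_k$-grid hyperplanes $\{x_k=t\}$ passing through that range.

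Summing over all sub-cells, the total number of cells met by $H$ is $r^{k-1}+S$, where $S$ counts pairs (sub-cell, interior level $t$) such that $g$ attains the value $t$ on the sub-cell. Interchanging the order of summation, $S=\sum_t N_t$, where $t$ ranges over the $r-1$ interior $x_k$-levels and $N_t$ denotes the number of $(k-1)$-dimensional sub-cells that the affine subspace $H\cap\{x_k=t\}$ passes through. I would then apply the inductive hypothesis to each level: since $H$ is generic (equivalently, no coefficient of its defining equation vanishes), the restriction $H\cap\{x_k=t\}$ is a generic hyperplane in the induced $(k-1)$-dimensional grid on $\{x_k=t\}$, so $N_t\leq (k-1)r^{k-2}$. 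Plugging in yields
\[
  \#\{\text{cells met by }H\}\;\leq\; r^{k-1}+(r-1)(k-1)r^{k-2}\;\leq\; kr^{k-1},
\]
which completes the induction.

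The only subtle point is verifying that the inductive hypothesis actually applies at each slicing level, i.e., that $H\cap\{x_k=t\}$ is genuinely a generic hyperplane in $\R^{k-1}$. This follows immediately from the genericity of $H$: if $H$ is defined by $\sum_i c_i x_i = d$ with all $c_i\neq 0$, the slice is defined by $\sum_{i<k}c_i x_i = d-c_k t$, which again has no vanishing coefficient and so is not parallel to any of the coordinate axes $e_1,\ldots,e_{k-1}$. Everything else is straightforward bookkeeping.
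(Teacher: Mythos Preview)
Your inductive argument is correct and gives a genuinely different proof from the paper's. The paper avoids induction entirely: after normalising so that $H=\{X_1+\cdots+X_k=C\}$, it groups the cells into \emph{diagonals}, where the diagonal through cell $(e_1,\ldots,e_k)$ consists of the cells $(e_1+a,\ldots,e_k+a)$ for $a\geq 0$. Every diagonal starts at a cell with some $e_i=1$, so by a union bound there are at most $kr^{k-1}$ diagonals; and because moving one step along a diagonal shifts the range of $X_1+\cdots+X_k$ on the cell to an adjacent, essentially disjoint interval, $H$ meets each diagonal in at most one cell.

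Your slicing-and-induction approach is equally natural and in fact yields the marginally sharper bound $r^{k-1}+(r-1)(k-1)r^{k-2}=kr^{k-1}-(k-1)r^{k-2}$. One small imprecision worth noting: the claim that ``the number of such $i_k$ equals $1$ plus the number of interior $x_k$-grid hyperplanes passing through that range'' is only an equality when the range of $g$ over the sub-cell actually meets the $x_k$-extent of the big box; otherwise the count is $0$, not $1$. Replacing ``equals'' by ``is at most'' fixes this and leaves the rest of the argument intact.
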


\begin{proof}
By translation and scaling, we may assume that the origin is one of the corners of the box, the facets of the box are all parallel to coordinate hyperplanes and that the hyperplane $H$ is of the form $X_1 + \dots + X_k = C$ for some constant $C$.

We can index each cell by a $k$-tuple $(e_1, \dots, e_k)$ which denotes its position among the cells on each axis, starting from the origin. 
Now for each cell in which at least one of the $e_i$ is $1$, we define its associated \emph{diagonal} as the set of cells with indices
$(e_1+a, \dots, e_k+a)$ for  $0\leq a \leq r - \max_i e_{i}$.

There are $kr^{k-1}$ such diagonals which cover all the cells, and $H$ intersects each diagonal in at most one cell, completing the proof.
\end{proof}

\begin{proof}[Proof of Proposition \ref{prop lucky pairs most general}]
For each $i$, partition $B_i + B_i - B_i$ into $r^{1/(k-1)}/4$ intervals, each containing $4|B_i+B_i-B_i|/r^{1/(k-1)}$ elements. Since $B_i \subset B_i+B_i-B_i$, this also partitions the elements of $B_i$. Doing this for each $i$ partitions $\prod_i B_i$ into boxes and hence, since the $g_i$ are all monotone functions, also partition $\prod_i A_i$ into boxes (or cells). 

Now consider some $x \in X_r$. Each solution to
\[ x = g_1(b_1) + \dots + g_k(b_k) \]
corresponds to a point $(g_1(b_1), \dots, g_k(b_k))$ on the hyperplane
\[ x= X_1 + \dots + X_k. \]
By Lemma \ref{hyperplane lemma} this hyperplane can pass through at most $(k/4^{k-1})\cdot r \leq r/2$ cells and the hyperplane has $r$ points on it. By the pigeonhole principle, there must be $\gg r$ pairs of points which lie together in the same cell. By construction, these are lucky pairs, which completes the proof.
\end{proof}

    \section{Proof of Theorem \ref{cor:GG}}\label{proofs1}

Despite the fact that Theorem \ref{cor:GG} is essentially a less general version of Theorem \ref{thm long asymmetric intro}, we present its proof separately to illustrate exactly how much convexity is needed.

\begin{proof}[Proof of Theorem \ref{cor:GG}]
Let us denote the desired universal bound for $T_k(A_1,\ldots,A_k)$ as $$\mathcal T_k:=\mathcal T_k(N)=\sup T_k(A_1,\ldots,A_k)\,$$ where the supremum is taken over all $k$-tuples of $s$-convex sets of size $N$.
Let $A_i := \{ a^{(i)}_1 < \ldots < a^{(i)}_N \}$ for each $1 \leq i \leq k$. 
We are counting solutions to the equation 
\begin{equation}\label{long convex energy equation}
a^{(1)}_{e_1} + \dots + a^{(k)}_{e_k} = a^{(1)}_{e_1'} + \dots + a^{(k)}_{e_k'},
\end{equation}
for some indices $e_1, \ldots, e_k, e_1',\ldots, e_k' \in [N]$.

The proof is by induction on $s$ where $k=2^s$, the base case $s=0$ being trivial: the number of solutions of
$$
a=a':\quad a,a'\in A_1
$$
is at most (in fact precisely) $N$.

Let us assume that in the equation \eqref{long convex energy equation} no two terms $a^{(i)}_{e_i}$ and $a^{(i)}_{e'_i}$ are the same for $i=1,\ldots,k.$ More precisely, suppose that such non-degenerate solutions to equation \eqref{long convex energy equation} constitute at least half of the quantity $T_k(A_1,\ldots,A_k).$ Indeed, the number of degenerate solutions is at most
\[\sum_{j=1}^kT_{k-1}(A_1,\ldots,A_{j-1},A_{j+1},\ldots,A_k)\]
and by freezing all but $k/2$ of the variables on each side and applying Cauchy-Schwarz, we get 
$$ T_k(A_1,\ldots,A_k)\ll_k N^{k-1} \mathcal T_{k/2}\, \ll \mathcal{T}_k.$$
Thus if the degenerate solutions constituted more than half of the upper bound, the proof would be complete.

Recall that $X_r$ is the set of $r$-rich sums. For each $x \in X_r$, we apply Proposition \ref{prop lucky pairs most general} with $B_1 = \ldots = B_k = [N]$ and $g_i(y):=a^{(i)}_y$, for all $1\leq i \leq k$. By considering all lucky pairs arising from any $x \in X_r$, one obtains
$$
r|X_r|\ll \# \text{ solutions to } \eqref{long convex energy equation}\,,
$$
where $|e_i - e_i'| \ll N/r^{1/({k-1})}$ for all $1\leq i \leq k$.
We now choose the $h_i:=e_i-e_i'$ for $1\leq i \leq k$ which maximise the number of  solutions to \eqref{long convex energy equation}.

Notice that for each $h_i$, $\Delta_{h_i} A_i:=\{a^{(i)}_{e_i'+h_i} - a^{(i)}_{e_i'}\}$ is an $(s-1)$-convex set (not a multiset) and has $\leq N$ elements. Here we have used that $h_i$ is non-zero, which is a consequence of the non-degeneracy assumption.
We can subtract all the elements on the right-hand side of \eqref{long convex energy equation}, and since there are $N^k/r^{k/(k-1)}$ ways altogether of choosing $e_1-e_1',\ldots,e_k-e_k'$, it follows that
$$
r|X_r|\ll  \frac{N^k}{r^{{k}/({k-1})}}\;\cdot \# \text{ of solutions to } a_1 +\ldots +a_k = 0
$$
where $a_i \in \Delta_{h_i} A_i$  for  $1\leq i\leq k$.
Rearranging the terms of the above equation so there are $k/2$ terms on each side of the equation and using Cauchy-Schwarz, one can then apply the induction hypothesis to obtain
\begin{equation}|X_r| \ll   \frac{N^k}{r^{(2k-1)/(k-1)}} \cdot \mathcal T_{k/2}\,.
\label{est:xr}\end{equation}

Using $T_k(A_1,\ldots,A_k) = \sum_{r \text{ dyadic}} r^2|X_r|$, we optimise in $r$ by taking, for some $r_*$ to be determined, the trivial bound 
$r_* N^k$ for $r\leq r_*$, and the dyadic sum with \eqref{est:xr} over the values of $r\geq r_*$. Thus
$$
T_k(A_1,\ldots,A_k) \ll r_* N^k +  \frac{N^k}{r_*^{1/(k-1)}} \mathcal T_{k/2}\,.
$$
Taking the optimal choice of
$$
r_* = \mathcal{T}_{k/2}^{1-1/k}\,,
$$
we get 
\[T_k(A_1,\ldots,A_k)\ll N^k \mathcal T_{k/2}^{1-1/k}
= N^{2^s} \cdot N^{(2^s-s+\alpha_{s-1})(1-2^{-s})} \ll N^{2^{s+1}-1-s+\alpha_s}.  \]
This closes the induction and completes the proof.
\end{proof}


\begin{rmk}
The step where we apply Cauchy--Schwarz is a variation on the well-known procedure to prove that
\[ E(A,B) \leq E(A)^{1/2}E(B)^{1/2}. \]
\end{rmk}

As alluded to in the introduction, we can refine this approach to obtain a slightly better bound, specifically a smaller value of $\alpha_s$. 
If we assume that $s\geq 2$ then $s=2$ becomes the base case of the induction. Using the bound \eqref{est:xr}, $T(A_1,A_2,A_3,A_4)$ can be bounded in terms of $E(A)$ where $A$ is a $1$-convex set of size $N$.
Estimating $E(A)$ using Theorem \ref{garaevs energy bound}
produces the improvement $\alpha_s = -\frac{1}{8} + \sum_{j=1}^s j2^{-j}$. Using instead Shkredov's stronger bound \cite{ShkredovHigherEnergies2013}
\[ E(A) \ll N^{32/13}, \]
gives the further improvement $\alpha_s = -\frac{2}{13} + \sum_{j=1}^s j2^{-j}$.

In the above proof, $s$-convexity is only used in one place.  
Since all the $A_i$ are $s$-convex, the sets $\Delta_{h_i}A_i$ are $(s-1)$-convex.
In particular, this implies that $\Delta_{h_i}A_i$ will always be a set rather than a multiset which is essential when iterating the argument. 



\section{Proofs of Theorems \ref{short asymmetric intro} and \ref{thm long asymmetric intro}}\label{proofs2}

In this section, we focus on the results pertaining to sets with small additive doubling. The following lemma is included to clarify the key step for generalising our earlier results. 
\begin{lem}\label{small doubling lemma}
Let $D:=\{d_1 < d_2 < \dots < d_{|D|}\}$ be the positive differences in $B-B$. If $b, b' \in B$ with $b > b'$ and  $n_B(b,b') \leq Z$, then $b - b' \leq d_Z$. In other words, if $n_B(b,b') \leq Z$ then there are at most $Z$ possible values that $b-b'$ can take. 
\end{lem}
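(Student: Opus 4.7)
The plan is to establish an injection from the set of positive differences $d \in B - B$ satisfying $d \leq b - b'$ into the set of elements of $B + B - B$ lying in the interval $(b', b]$. Given such a $d$, the natural candidate is the translate $b' + d$: since $d$ is positive and at most $b - b'$, we have $b' + d \in (b', b]$, and since $d = \beta_1 - \beta_2$ for some $\beta_1, \beta_2 \in B$, we get $b' + d = b' + \beta_1 - \beta_2 \in B + B - B$, using $b' \in B$. This map is clearly injective because distinct $d$'s produce distinct translates.

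From the hypothesis $n_B(b, b') \leq Z$, the interval $(b', b]$ contains at most $Z$ elements of $B + B - B$, so by the injection there are at most $Z$ indices $j$ with $d_j \leq b - b'$. Now the quantity $b - b'$ is itself a positive difference in $B - B$, hence equal to $d_j$ for some $j$, and by what we just showed $j \leq Z$. Therefore $b - b' = d_j \leq d_Z$, which is the claimed inequality. The second sentence of the lemma is an immediate restatement, since $b - b'$ must lie in the set $\{d_1, \dots, d_Z\}$.

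I do not anticipate a genuine obstacle here; the only subtlety is being careful about which ambient set the elements in the interval belong to, namely $B + B - B$ rather than $B$ or $B - B$. This asymmetry is exactly what makes the correspondence work: a difference in $B - B$ shifted by an element of $B$ lands in $B + B - B$, matching the set whose elements $n_B$ counts in Definition \ref{defn lucky pairs most general}.
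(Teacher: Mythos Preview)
Your proof is correct and is essentially the same as the paper's: both use the injection $d_i \mapsto b' + d_i$ to place the positive differences $d_1,\dots,d_Y$ (where $b-b'=d_Y$) inside $(b',b]\cap(B+B-B)$. The paper phrases this as a contradiction (assume $Y>Z$, obtain $n_B(b,b')\geq Y>Z$), while you argue directly, but the content is identical.
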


\begin{proof}
If not then $b-b' = d_Y$ where $Y>Z$. But then 
\[ b' < b' + d_i \leq b, \]
for $i = 1,\dots, Y$. Thus there are at least $Y > Z$ elements of $B+B-B$ in $(b',b]$, contradicting that $n_B(b',b) \leq Z$.
\end{proof}

\begin{proof}[Proof of Theorem \ref{short asymmetric intro}]
Let $C := \{c_1 <\dots < c_{L}\}$. 
For each $x \in X_r$, we apply Proposition \ref{prop lucky pairs most general} with $k=2, B_1 = B, B_2 = [L]$ and functions $g_1(t) := f(t), g_2(t) := c_t$.
By considering all $x \in X_r$, this implies that the total number of lucky pairs from $r$-rich sums is $\gg r|X_r|$. Since lucky pairs give rise to solutions to the energy equation, it follows that
\begin{equation}\label{simple energy small doubling asymmetric}
r|X_r| \ll \# \text{ solutions to } f(b_1) - f(b_2) = c_{e_{2}} - c_{e_1},
\end{equation}
where $n_B(b_1,b_2) \ll KN/r$ and $|e_{2}-e_{1}| \ll L/r$. 
It follows from Lemma \ref{small doubling lemma} that there are at most $KN/r$ possible values for $|b_2-b_1|$. 

After fixing $b_1-b_2, e_{2}-e_{1}$ and $c_{e_{1}}$ in \eqref{simple energy small doubling asymmetric}, which can be done in $KNL^2/r^2$ ways, the energy equation admits at most one solution since $f_d(x):= f(x+d) - f(x)$ is a monotone function.

It follows that $|X_r|\ll KNL^2/r^3$. Since \[E(A,B)\ll  \sum_{r \text{ dyadic}} r^2|X_r|\ll r_*\sum_{\substack{r \text{ dyadic}\\r\leq r_*}} r|X_r|+\sum_{\substack{r \text{ dyadic}\\r> r_*}} \frac{KNL^2}{r}\ll r_*NL+\frac{KNL^2}{{r_*}}\]
we get that, upon choosing $r_*=(KL)^{1/2}$,
\[ E(A,B) \ll K^{1/2}NL^{3/2}. \qedhere \]

\end{proof}

\begin{proof}[Proof of Theorem \ref{thm long asymmetric intro}]
As in the proof of Theorem \ref{cor:GG}, denote the desired universal bound for $T_k(A_1,\ldots,A_k)$ as $$\mathcal T_k(N;K_1, \dots, K_k)\,.$$
We are counting solutions to the equation 
\begin{equation}\label{long asymmetric energy equation}
f_1(b_1) + \dots + f_k(b_{k}) = f_1(b_1') + \dots + f_k(b_{k}'),
\end{equation}
where $b_i, b_i' \in B_i$ for all $i$.

The proof is by induction on $s$ where again the base case $s=0$ is trivial: the number of solutions of
$$
f_1(b)=f_1(b'):\quad b,b' \in B_1
$$
is at most $N$.

Let us assume that for each solution to \eqref{long asymmetric energy equation} no two terms $f_i(b_i)$ and $f_i(b_i')$ are equal for any $i=1,\ldots,k.$ More precisely, suppose that such non-degenerate solutions to equation \eqref{long asymmetric energy equation} constitute at least half of the quantity $T_k(A_1,\ldots,A_k).$ For otherwise, as in the proof of Theorem \ref{cor:GG}, using a trivial upper bound and the induction hypothesis, we would have 
$$T_k(A_1,\ldots,A_k)\ll N^{k-1} \mathcal T_{k/2}(N;K_{\iota_1}, \dots, K_{\iota_{k/2}}) \, \ll \mathcal{T}_k(N;K_1,\dots,K_k),$$
where $K_{\iota_1}, \dots, K_{\iota_{k/2}}$ are the largest $k/2$ terms among all the $K_i$. This would complete the proof immediately.

As previously seen, $X_r$ contains the sums $x \in A_1 + \dots + A_k$ with $r \leq r_{A_1 + \dots + A_k}(x) < 2r$.
For each $x \in X_r$, we now apply Proposition \ref{prop lucky pairs most general} with $g_{i}(b) := f_i(b)$ for $1\leq i\leq k$.
This obtains
\[ r|X_r| \ll \# \text{ solutions to } \eqref{long asymmetric energy equation}, \]
where $n_{B_i}(b_{i},b_{i}') \ll K_i N/r^{1/(k-1)}$ for all $1 \leq i \leq k$.

We now choose the $h_i:=b_i-b_i'$ for $1\leq i \leq k$ which maximise the number of  solutions to \eqref{long asymmetric energy equation}, and then rearrange to obtain
\begin{equation}\label{iterated long asymmetric energy equation}
(\Delta_{h_1}f_{1})(b_1') + \dots + (\Delta_{h_{k/2}}f_{k/2})(b_{k/2}') = (\Delta_{h_{k/2+1}}f_{k/2+1})(b_{{k/2}+1}) + \dots + (\Delta_{h_k}f_{k})(b_{k}).
\end{equation}
By Lemma $\ref{small doubling lemma}$, there are at most $\prod_{i=1}^k(K_i N)/r^{k/(k-1)}$ ways altogether of choosing the $b_i-b_i'$, so we have
\begin{equation}\nonumber
r|X_r| \ll \frac{\prod_{i=1}^k(K_i N)}{r^{k/(k-1)}} \cdot \# \text{ solutions to } \eqref{iterated long asymmetric energy equation}.
\end{equation}

Applying Cauchy--Schwarz proves that the number of solutions to \eqref{iterated long asymmetric energy equation} is bounded above by
\begin{equation}\label{to use induction hypothesis on}
T((\Delta_{h_1} f_1)(B_1),\dots,(\Delta_{h_{k/2}}f_{k/2})(B_{k/2}))^{1/2} T((\Delta_{h_{{k/2}+1}}f_{k/2+1})(B_{k/2+1}),\dots,\Delta_{h_{k}}(f_k)(B_k))^{1/2}.
\end{equation}

Since all the functions $\Delta_{h_i} f_i$ are $(s-1)$-convex, the induction hypothesis upper bounds \eqref{to use induction hypothesis on} by
\[ \mathcal T_{k/2}(N;K_1, \dots, K_{k/2})^{1/2} \mathcal T_{k/2}(N;K_{k/2+1}, \dots, K_k)^{1/2}, \]
whence 
\begin{equation}\label{est:xr small doubling}
    |X_r| \ll \frac{\prod_{i=1}^k(K_i N)}{r^{(2k-1)/(k-1)}} \cdot\mathcal T_{k/2}(N;K_1, \dots, K_{k/2})^{1/2} \mathcal T_{k/2}(N;K_{k/2+1}, \dots, K_k)^{1/2}.
\end{equation}

Using $T_k(A_1,\ldots,A_k) = \sum_{r \text{ dyadic}} r^2|X_r|$, we optimise in $r$ by taking, for some $r_*$ to be determined, the trivial bound 
$r_* N^k$ for $r\leq r_*$, and the dyadic sum with \eqref{est:xr small doubling} over the values of $r\geq r_*$. Thus
$$
T_k(A_1,\ldots,A_k) \ll r_* N^k +  \frac{\prod_{i=1}^k(K_i N)}{r_*^{1/(k-1)}} \mathcal T_{k/2}(N;K_1, \dots, K_{k/2})^{1/2} \mathcal T_{k/2}(N;K_{k/2+1}, \dots, K_k)^{1/2}\,.
$$
Taking the optimal choice of
$$
r_* = \left(\prod_{i=1}^k K_i^{1-\frac{1}{k}}\right) \cdot \mathcal T_{k/2}(N;K_1, \dots, K_{k/2})^{(\frac{1}{2}-\frac{1}{2k})} \mathcal T_{k/2}(N;K_{k/2+1}, \dots, K_k)^{(\frac{1}{2}-\frac{1}{2k}})\,,
$$
it is elementary to check that
\[T_k(A_1,\ldots,A_k)\ll r_*N^k \ll \left (\prod_{i=1}^k K_i^{2-(2+2s-2\alpha_s)2^{-s}} \right) \cdot N^{2^{s+1}-1-s+\alpha_s}.\]
This closes the induction and completes the proof.
\end{proof}

Similar to Theorem \ref{cor:GG}, we can refine this approach to obtain a bound with a slightly smaller (at least by $1/8$) value of $\alpha_s$ for $s\geq 2$. 

\smallskip
The above proofs apply to sums of length $k=2^s$, where we start the induction with the trivial estimate for $s=0$.
One can also easily develop similar inductions that start with the quantity $T(A_1,A_2,A_3)$ and formulate analogues of Theorems \ref{cor:GG} and \ref{thm long asymmetric intro} for $k=3\cdot 2^s$.
We leave this to the interested reader, concluding this section by stating the base case $k=3$, since it will be once used in the next section.

\begin{thm}\label{length 3 asymmetric energy thm}
If $A_1,A_2,A_3$ are $2$-convex sets with $N$ elements. Let $X_r$ be the set of $r$-rich sums from $A_1+A_2+A_3$. Then
\begin{equation} \label{e:3-popular}
|X_r| \ll \frac{ N^{14/3} }{r^{5/2}}\,.
\end{equation}
In particular
\[ T_3(A_1,A_2,A_3) \ll N^{4+\frac{1}{9}}. \]
\end{thm}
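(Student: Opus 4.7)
The plan is to follow the lucky-pairs scheme of Theorem \ref{cor:GG} with $k=3$, but to stop the iteration at the Konyagin--Garaev popular-sum bound rather than at the trivial base case. As in the proof of Theorem \ref{cor:GG}, I would first dispose of the degenerate solutions (those with $a^{(i)}_{e_i} = a^{(i)}_{e_i'}$ for some $i$): freezing the coincident coordinate and applying Cauchy--Schwarz together with Theorem \ref{garaevs energy bound} to the remaining two-term energy keeps these contributions safely below $N^{4+1/9}$.

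Under the non-degeneracy assumption, I would invoke Proposition \ref{prop lucky pairs most general} with $B_1=B_2=B_3=[N]$ and $g_i(y)=a^{(i)}_y$. Condition \eqref{lucky pair condition} with $k=3$ gives $|e_i-e_i'| \ll N/r^{1/2}$, and summing the $\gg r$ lucky pairs over $x\in X_r$, freezing the most popular triple of nonzero index-differences $h_i := e_i - e_i'$, and rearranging, one obtains
\[ r|X_r| \;\ll\; \frac{N^3}{r^{3/2}}\cdot M(h_1,h_2,h_3), \]
where
\[ M(h_1,h_2,h_3) := \#\{(d_1,d_2,d_3) \in D_1\times D_2\times D_3 : d_1+d_2+d_3=0\} \]
and $D_i := \Delta_{h_i}A_i$ is a $1$-convex set of $\le N$ elements, using that each $A_i$ is $2$-convex.

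The key step is to show $M \ll N^{5/3}$. I would dyadically decompose via $Y_q := \{z : q \le r_{D_1+D_2}(z) < 2q\}$ and write
\[ M = \sum_z r_{D_1+D_2}(z)\,\mathbf{1}_{-D_3}(z) \;\ll\; \sum_{q\text{ dyadic}} q\,|Y_q\cap(-D_3)|. \]
Garaev's argument applied to $D_1+D_2$, in its asymmetric form recorded at \eqref{simple X_r with B} (which uses only that $D_1$ is convex), yields $|Y_q| \ll N^3/q^3$, while trivially $|Y_q\cap(-D_3)| \le N$. The two bounds cross at $q \sim N^{2/3}$, and a geometric sum on either side of this threshold gives $M \ll N^{5/3}$. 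Substituting back yields $|X_r| \ll N^{14/3}/r^{5/2}$, which is \eqref{e:3-popular}. To deduce $T_3 \ll N^{4+1/9}$ I would write $T_3 \approx \sum_{r\text{ dyadic}} r^2 |X_r|$, use the trivial $|X_r| \le N^3/r$ for $r \le r_*$ and \eqref{e:3-popular} for $r > r_*$, and optimise at $r_* = N^{10/9}$.

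I expect the main subtlety to be the bookkeeping around degenerate solutions, so that the assumption $h_i \ne 0$ is genuinely justified and each $D_i$ is a bona fide $1$-convex set rather than a multiset; the dyadic balance for $M$ and the final optimisation of $r_*$ are routine within the framework of Sections \ref{methods}--\ref{proofs1}.
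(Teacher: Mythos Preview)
Your proposal is correct and follows essentially the same route as the paper's proof: the lucky-pairs step with $k=3$ reduces to bounding $M=\#\{d_1+d_2+d_3=0\}$ over $1$-convex sets, and your dyadic argument for $M\ll N^{5/3}$ via $|Y_q|\ll N^3/q^3$ (from \eqref{simple X_r with B}) balanced against $|Y_q\cap(-D_3)|\le N$ is exactly the paper's computation of $S_{A+B=C}\ll N^{5/3}$. If anything you are slightly more careful than the paper's terse exposition here, since you explicitly flag the degeneracy needed to ensure each $D_i$ is a genuine $1$-convex set.
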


\begin{proof}
By the familiar lucky pairs argument
$$
r|X_r|\ll N^3/r^{3/2}\cdot S_{A+B=C}\,,
$$
where $S$ is the maximum number of solutions to
$$ a+b=c: \quad a\in A,\,b\in B,\,c\in C.
$$
for some $1$-convex sets $A,B,C$ with $|A|=|B|=|C|=N$. 
It remains to show that $S_{A+B=C} \ll N^{5/3}$.

Consider the $r_0$-rich sums $a+b \in A+B$ and recall the corresponding bound \eqref{simple X_r with B}. Combining with a trivial bound, we get
\[ S_{A+B=C} \ll |A||B|^2/r_0^2 + |C|r_0 = N^3r_0^{-2} + Nr_0, \]
and optimising in $r_0$ obtains
\[S_{A+B = C}\ll N^{5/3}\,. \qedhere
\]

\end{proof}

\section{Applications to convex sumsets}\label{applications}
We use our estimates to improve the state of the art sumset bounds for convex sets. The best known bounds to date are respectively due to Schoen and Shkredov \cite{SchoenShkredov2011}, Rudnev and Stevens \cite{RudnevStevens2020} and Shkredov \cite{ShkredovHigherEnergies2013}, and are summarised below.
\begin{thm}\label{th:sumset bounds} If $A$ is convex, then
\begin{align*} 
|A-A|&\;\gtrsim \;|A|^{8/5\;=\;1.6}\,\\
|A+A|&\;\gtrsim \;|A|^{30/19\;\approx\; 1.579}\,,\\
E(A) &\;\lesssim \;|A|^{32/13\;\approx\; 2.4615}\,.
\end{align*}\end{thm}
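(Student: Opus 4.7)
The three bounds are each proved by analysing higher moment energies of convex sets. A unifying strategy is to control $E_p(A) := \sum_x r_{A-A}(x)^p$ for various $p$, using the Szemer\'edi--Trotter consequences of Corollary~\ref{cor:ST}, and then to leverage these estimates through H\"older's inequality and the Balog--Szemer\'edi--Gowers theorem.

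For the energy bound $E(A) \lesssim N^{32/13}$, the plan is to begin with the estimate $E_3(A) \lesssim N^3$, which follows directly from Corollary~\ref{cor:ST}. A single application of Cauchy--Schwarz,
\[ E(A)^2 \;=\; \Bigl(\sum_x r_{A-A}(x)^2\Bigr)^2 \;\leq\; \Bigl(\sum_x r_{A-A}(x)\Bigr)\Bigl(\sum_x r_{A-A}(x)^3\Bigr) \;=\; N^2 E_3(A), \]
recovers Garaev's $N^{5/2}$ bound. To improve the exponent to $32/13$, one would then examine the ``popular'' level sets $X_r$ that dominate the dyadic decomposition $E(A) \approx \sum_{r} r^2|X_r|$, and apply the Balog--Szemer\'edi--Gowers theorem to extract a large subset of $A$ with controlled doubling. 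Since $A$ is convex, any such subset cannot have small sumset by Elekes--Nathanson--Ruzsa, which yields a bootstrap inequality relating $|X_r|$ to itself. Iterating this refinement to a fixed point is what pins down the exponent $32/13$.

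For the difference-set bound $|A - A| \gtrsim N^{8/5}$, the inequality $|A|^4 \leq |A - A| \cdot E(A)$ combined with the above energy bound yields only $|A - A| \gtrsim N^{20/13}$. To reach $N^{8/5}$ one should instead combine the sharper estimate $E_{3/2}(A) \ll N^{5/2}$ from Corollary~\ref{cor:ST} with a H\"older interpolation against a suitable third-order energy, plus further combinatorial input that exploits convexity directly. The sumset bound $|A+A| \gtrsim N^{30/19}$ follows from an analogous but asymmetric argument involving a bilinear energy of the form $E(A, A+A)$ and an iteration of Pl\"unnecke--Ruzsa type inequalities; the inherent asymmetry between sums and differences of convex sets explains why the exponent here is weaker than $8/5$.

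The main obstacle in all three cases is the delicate optimisation of exponents. The argument must track a full chain of Cauchy--Schwarz / H\"older inequalities, balance the contributions of multiple dyadic level sets $X_r$, and invoke Balog--Szemer\'edi--Gowers with sharp dependence on doubling constants. The exponents $8/5$, $30/19$, and $32/13$ emerge as fixed points of these propagation systems, and tightening them beyond the immediate consequences of Corollary~\ref{cor:ST} is technically the most delicate part of each proof — which is why these three results were historically achieved separately and by different methods.
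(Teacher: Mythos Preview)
This theorem is not proved in the paper at all; it is quoted as background. The three bounds are attributed, respectively, to Schoen--Shkredov, Rudnev--Stevens, and Shkredov, and the paper only \emph{uses} the third of them (the energy bound $E(A)\lesssim |A|^{32/13}$) as input for its own results on $2$-convex sets in Theorem~\ref{th:sumset bounds new}. So there is no in-paper argument to compare your sketch against.

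Evaluated on its own, your outline is not a proof and in places points in the wrong direction. The suggestion that $E(A)\lesssim N^{32/13}$ is obtained via a Balog--Szemer\'edi--Gowers bootstrap is not how Shkredov's argument works: it is his operator (spectral) method, bounding the top eigenvalue of the matrix $(r_{A-A}(a-b))_{a,b\in A}$ and playing it off against $E_3(A)$, with no BSG step. (The paper in fact rehearses exactly this spectral machinery inside the proof of Theorem~\ref{th:sumset bounds new}.) A BSG-based iteration of the type you describe would lose control of constants and would not land on the exponent $32/13$. For $|A-A|\gtrsim N^{8/5}$, the Schoen--Shkredov argument again runs through $E_3(A)\lesssim N^3$ together with a popular-difference pigeonholing identity (of the same flavour as \eqref{cs:dif}), not through $E_{3/2}$; note also that Corollary~\ref{cor:ST} does not give the bound you quote, since it covers $E_{1+p}$ only for $1<p<2$, and in any case $E_{3/2}(A)\ll N^{5/2}$ is the trivial estimate, carrying no convexity information. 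For $|A+A|\gtrsim N^{30/19}$, the Rudnev--Stevens proof uses the same popular-difference-by-energy pigeonholing combined with $E_3(A)$; Pl\"unnecke--Ruzsa is not the engine. In short, you are gesturing at the right family of objects (higher moments, dyadic level sets), but the specific mechanisms you name --- BSG, $E_{3/2}$, Pl\"unnecke --- are not the ones that actually deliver these exponents.
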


We can get small improvements of all these bounds for $s$-convex sets, with $s\geq 2$. These estimates rely on using our new bounds apropos of the quantity $T_4(A)$  in Theorem  \ref{th:t4}, as well as $T_3(A)$ in \eqref{e:3-popular}. We will incorporate them into existing methods developed by Shkredov and collaborators  (see, e.g. \cite{SchoenShkredov2011}, \cite{ShkredovHigherEnergies2013}, \cite{MurphyFPMS}), which rely extensively on the use of the third moment \begin{equation}E_3(A):=\sum_x r_{A-A}^3(x)\ll N^3\log N\,,\nonumber\end{equation}
by \eqref{simple X_r bound} (the same bound would normally be attributed to the use of Szemer\'edi-Trotter theorem).

We have attempted to make exposition in this section prerequisite-free. Hence, observe that after resummation the quantity $E_3(A)$ (not to be confused with $T_3(A)$) has the following meaning. If $[a,b,c]$ denotes an equivalence class of triples $(a,b,c)\in A^3$ by translation, with $r([a,b,c])$ triples therein, and $[A^3]$ denotes the set of these equivalence classes, then
\begin{equation}\label{eq:3eq}
E_3(A) = \sum_{x\in [A^3]} r^2(x)\,.
\end{equation}

Let $A$ be an $s$-convex set. When $s=2$, the improvement comes from fetching the equation, accounting for the moment $T_4(A)$, for which we have estimate \eqref{e:needed}, where for the quantity $E_{s-1}$ one can substitute the energy bound from Theorem \ref{th:sumset bounds} for $1$-convex sets.
Furthermore, for $s>2$ this process can then be iterated to obtain incrementally better energy bounds for more convex sets, the iterations rapidly converging. We note that even the simpler energy estimate $E(A)\ll N^{5/2}$ for $1$-convex sets would already improve the estimates of Theorem \ref{th:sumset bounds} for $2$-convex sets. We present only estimates for $2$-convex sets in the next theorem; the small improvements for more convex sets can be found in the forthcoming proof. 

\begin{thm}\label{th:sumset bounds new} If $A$ is a $2$-convex set with sufficiently large\footnote{If $N$ is sufficiently large, then decimal approximation of the exponents enable one to replace the $\lesssim, \gtrsim$ symbols by, respectively, $\leq,\geq$.} size $N$, then
\begin{align*}
|A-A|&\;\gtrsim\; N^{1+ 151/234 \;\approx\; 1.645}\,,\\
|A+A|&\;\gtrsim\; N^{1+ 229/309\;\approx \;1.587}\,,\\
E(A) & \; \leq \; N^{2.4554}\,.
\end{align*}\end{thm}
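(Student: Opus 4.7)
The plan is to substitute our new popular-sums estimates for $2$-convex sets into the existing chains of inequalities used to prove Theorem \ref{th:sumset bounds} for $1$-convex sets, and then re-optimize the dyadic thresholds. The two main inputs will be \eqref{e:needed} from Theorem \ref{th:t4} (for the $4$-fold sumset) and \eqref{e:3-popular} from Theorem \ref{length 3 asymmetric energy thm} (for the $3$-fold sumset), together with the bootstrap input $E_1 \lesssim N^{32/13}$ from the $1$-convex case of Theorem \ref{th:sumset bounds}.

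For the energy bound $E(A) \leq N^{2.4554}$, I would follow Shkredov's proof of $E(A) \lesssim |A|^{32/13}$ in \cite{ShkredovHigherEnergies2013}. A crucial step there is a Szemer\'edi--Trotter-type bound on a popular-sums count in $A+A+A+A$, which for general convex $A$ has the shape $|\{x:r_{4A}(x)\geq r\}|\ll N^a/r^3$. For a $2$-convex set, I would replace this input by \eqref{e:needed},
\[ \bigl|\{x:r_{A\pm A\pm A\pm A}(x)\geq r\}\bigr|\lesssim \frac{N^4\, E_1}{r^{7/3}}, \]
with $E_1\lesssim N^{32/13}$. Since the $r$-exponent $7/3$ is smaller than the old $3$, Shkredov's dyadic split has to be re-balanced, and the new optimum yields the claimed numeric exponent.

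For the two sumset bounds, I would similarly adapt the Schoen--Shkredov argument for $|A-A|$ from \cite{SchoenShkredov2011} and the Rudnev--Stevens argument for $|A+A|$ from \cite{RudnevStevens2020}. Both route through a bound on rich sums in $A\pm A\pm A$, for which \eqref{e:3-popular} gives $|X_r|\ll N^{14/3}/r^{5/2}$ in the $2$-convex case (replacing an ST-based $r^{-3}$-type bound). One also feeds in the improved $E(A)$ obtained in the previous step, and uses the standard identity $r_{2A-2A}(2d)\geq r_{A-A}(d)^2$ to convert rich differences in $A-A$ into rich sums in the $4$-fold sumset so that \eqref{e:needed} becomes applicable. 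After substitution, one re-optimizes the threshold parameters to obtain the exponents $151/234$ and $229/309$.

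The main obstacle I expect is bookkeeping. Unlike the uniform $r^{-3}$ shape of the ST input, our new bounds carry different $r$-exponents ($7/3$ and $5/2$) and an additional $N$-factor from $E_{s-1}$, so the thresholds in the dyadic decompositions must be re-chosen and one must check that our bounds are actually the binding constraint in the relevant range of $r$ (otherwise the optimum is unchanged). This is why the numerical improvements over Theorem \ref{th:sumset bounds} affect only sub-leading terms. For $s>2$, the argument bootstraps: feeding the improved $E(A)$ for $s$-convex sets back into \eqref{e:needed} applied at level $s+1$ produces a geometric chain of refinements that converges rapidly, yielding slightly better exponents for each higher level of convexity, as remarked in the paragraph preceding the theorem.
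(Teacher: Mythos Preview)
Your high-level strategy is right—feed the new rich-sums estimates into the Schoen--Shkredov, Rudnev--Stevens, and Shkredov arguments and re-optimise—but you have the two key inputs essentially swapped between the sumset and energy parts, and this matters for the arithmetic.

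For $|A-A|$ and $|A+A|$, the paper does \emph{not} route through rich sums in $A\pm A\pm A$ via \eqref{e:3-popular}. Both arguments unwind the tautologies $(a-c)=(a-b)+(b-c)$ and $(a+b)-(b+c)=a-c$ and, after Cauchy--Schwarz against $E_3(A)$, arrive at the count
\[
|\{a_1\pm a_2\pm a_3\pm a_4=d:\,(a_1,\ldots,a_4)\in A^4,\ d\in A-A\text{ (resp.\ }D)\}|,
\]
which is a \emph{four-fold} quantity. This is bounded by H\"older against \eqref{e:needed}, with $E_{s-1}\lesssim N^{32/13}$ as the bootstrap input. The identity $r_{2A-2A}(2d)\geq r_{A-A}(d)^2$ is not used; no conversion is needed because the four-fold count appears directly. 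If you instead plugged in the three-fold bound \eqref{e:3-popular}, you would not get the exponents $151/234$ and $229/390$—those numbers come specifically from the H\"older split with exponents $4/7,3/7$ against the $r^{7/3}$ in \eqref{e:needed}. Also, the sumset bounds do not use the improved $E(A)$ for $2$-convex sets; they use only $E_{s-1}$ (the $1$-convex Shkredov bound), together with the Cauchy--Schwarz identity $K\geq N^3/E(A)$ in the $|A+A|$ case.

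For $E(A)$, your plan omits half of the argument. The paper follows Shkredov's spectral method, then splits $A-A$ into poor differences $D_1$ (where $r_{A-A}\leq\tau$) and rich differences $D_2$. The poor part is handled exactly as you say, via H\"older against \eqref{e:needed}. But the rich part is bounded using the \emph{three-fold} estimate \eqref{e:3-popular} together with the $E_{5/3}$ bound from Corollary~\ref{cor:ST}. Only after balancing these two contributions in $\tau$ does one obtain $E(A)^8\lesssim N^{15+24/7+151/252}E_{s-1}^{1/4}$ and hence the exponent $2+1705/3744\approx 2.4554$. Using \eqref{e:needed} alone with a single dyadic re-balance, as you propose, will not close to the stated numerics.
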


\begin{proof} 
We begin with the technically least demanding bound for the set $A-A$, where $A$ is $s$-convex for $s\geq 2$. Consider a tautology on triples $(a,b,c)\in A^3:$
$$
(a-c)=(a-b)+(b-c)\,.
$$
If $|A-A|=KN$ and $D$ be the set of popular differences with $\gg N/K$ realisations, by the pigeonhole principle the above tautology is valid for $\gg N^3$ triples $$(a,b,c)\in A^3:\, b-c,\,a-b\in D\,.$$
On the other hand, considering equivalence classes of triples $(a,b,c)$ by translation and using \eqref{eq:3eq} in combination with Cauchy-Schwarz, one has
\begin{equation}\label{cs:dif}\begin{aligned}
N^6 & \ll E_3(A) \; |\{d_1+d_2 \in A-A:\,(d_1,d_2)\in D^2\}| \\ & \lesssim
N^3 (K/N)^{2} \; |\{a_1-a_2+a_3-a_4 = d:\,(a_1,\ldots,a_4) \in A^4,\, d\in A-A\}|\,.
\end{aligned}\end{equation}

Using the H\"older inequality 
and \eqref{e:needed} with dyadic summation yields 
\begin{equation}\label{e:hold}|\{a_1-a_2+a_3-a_4 = d:\,(a_1,\ldots,a_4)\in A^4,\, d\in A-A\}|
\lesssim (KN)^{4/7} (N^4E_{s-1})^{3/7}\,,
\end{equation}
with $E_{s-1}$ as in \eqref{e:needed}. Thus
$$
N^{19/7} \lesssim K^{18/7}E_{s-1}^{3/7}\,.
$$
Using Shkredov's bound for $E_{s-1}$ yields, for $2$-convex $A$:
$$
K\gtrsim N^{\frac{151}{234}}\,.
$$
If $A$ is more than $2$-convex, one can asymptotically use the forthcoming bound \eqref{e:energyconv} for $E_{s-1}$, which improves the exponent for $|A-A|$ just by slightly over $.001$. Namely, if $A$ is sufficiently convex and $N$ is large enough, it follows that
$$
K\geq N^{.646}\,,
$$
the decimal approximation having accounted for replacing $\lesssim$ by $\leq$ for sufficiently large $N$ and $s$.

Furthermore, to bound $|A+A|$ we use a slightly more involved pigeonholing technique which is exposed in more detail in  \cite{RudnevStevens2020}. 
Suppose, $|A+A|=KN,$ define $P$ as a set of sums with $\gtrsim N/K$ realisations, and $D$ as a popular set of differences by energy (we will use energy to connect the difference set with the sum set). Namely $D$ is defined as follows.

By the dyadic pigeonhole principle, there exists $D\subseteq A-A, $ and a real number $1\leq \Delta<|A|$, such that for every $d\in D,\,\Delta\leq r_{A-A}(d)<2\Delta,$
and on top of this 
\begin{equation}
\label{en:pop}
E(A) \lesssim |D|\Delta^2.
\end{equation}
Moreover, by \eqref{simple X_r bound} one has $|D|\Delta^3 \ll |A|^3$ so
\begin{equation}\label{en:pops}
\Delta \lesssim |A|^3/E(A)\,.
\end{equation}

Then (after possibly passing to a large subset of $A,$ see \cite[Proof of Theorem 5]{RudnevStevens2020}) the analogue of the argument underlying the above estimate \eqref{cs:dif} becomes the tautology
$$
(a+b)-(b+c) = a-c \in D,\,a+b, \,b+c \in P\,,
$$
with the number of  triples $(a,b,c)\in A^3$ realising this being  $\gtrsim |A||D|\Delta.$ 

Hence, using \eqref{eq:3eq} and Cauchy-Schwarz as in \eqref{cs:dif} yields
$$\begin{aligned}
(|A||D|\Delta)^2 & \ll E_3(A) \;|\{s_1-s_2 = d:\,(s_1,s_2,d)\in P^2\times D\}| \\ & \lesssim
N^3 (K/N)^{2} \;|\{a_1+a_2-a_3-a_4 = d:\,(a_1,\ldots,a_4)\in A^4,\, d\in D^3\}|\\ &\lesssim K^2N^{19/7}E_{s-1}^{3/7} |D|^{4/7}\,,
\end{aligned}
$$
after using the H\"older inequality and \eqref{e:needed}, as in \eqref{e:hold}.

Multiplying both sides by 
$\Delta^{6/7}\lesssim \left(\frac{N^3}{E(A)}\right)^{6/7}$ (see \eqref{en:pops}) to balance the powers of $|D|$ and $\Delta$, so one can use 
$|D|\Delta^2\gtrsim E(A)$ (see \eqref{en:pop}), yields
$$
E(A)^{16/7} \lesssim K^2E_{s-1}^{3/7} N^{23/7}\,.
$$
Substituting Shkredov's bound for $E_{s-1}$ and using the standard Cauchy-Schwarz bound
$$
K\geq \frac{N^3}{E(A)}
$$
yields
$$
K\gtrsim N^{\frac{229}{390}}\,.
$$

Once again, if $A$ is more than $2$-convex, one can asymptotically use the forthcoming bound \eqref{e:energyconv} for $E_{s-1}$,  in which case
$$
K\gtrsim N^{\frac{16}{27}\; \approx\; 0.592}\,,
$$
the decimal approximation having accounted for replacing $\lesssim$ by $\leq$ for sufficiently large $N$ and $s$.

We now turn to the energy $E(A)$ estimate, where the analysis ends up being somewhat more involved. For the reader's convenience we briefly recall the key steps of Shkredov's spectral (alias operator) method we use, following, e.g. \cite{ShkredovHigherEnergies2013} (for an overview of the method see  \cite{OlmezovSpectral}). The operator method is used to replace the lower bounds from easy tautologies that enabled estimate \eqref{cs:dif} and its analogue in the bounds for $|A\pm A|$ derived above.

Once again, let $D$ be the set of popular differences by energy, satisfying \eqref{en:pop}, \eqref{en:pops}.

Identifying $D$ with its characteristic function, consider the quantity
$$S:= \sum_{a,b,c\in A} D(a-b) D(b-c) r_{A-A}(a-c) \,.
$$
The quantity $S$ takes triples $(a,b,c)\in A^3$ for which $a-b,b-c$ are in the ``popular'' set $D$, and counts each one the number of times the difference $a-c$ repeats itself. The spectral method enables one to get a lower bound on $S$, to be compared with the upper bound we will again obtain by \eqref{eq:3eq} and Cauchy-Schwarz.

Let us view $D$ as a $|A|\times |A|$ symmetric boolean matrix with $1$ at the position $(a,b)$ if $a-b\in D$ and $0$ otherwise. Similarly $r_{A-A}$ can be seen as a square symmetric matrix $R$ (where $R_{ij}:= r_{A-A}(i-j)$), which in addition is non-negative definite (checking this is tantamount to rearrangement of the energy equation, see e.g. \cite{ShkredovHigherEnergies2013}). Thus $S={\rm tr}\, DDR.$

Let $\mu_1$ be a positive eigenvalue of $D$ with the largest modulus and normalised eigenvector $\boldsymbol v\geq 0$ with all non-negative entries; this is possible by the Perron-Frobenius theorem. Since $D$ is symmetric, one can estimate 
$$
\mu_1 = \boldsymbol v\cdot D \boldsymbol v \geq \frac{|D|\Delta}{|A|}\,,
$$
replacing  $\boldsymbol v$ by the vector $\frac{1}{\sqrt{|A|}}\boldsymbol 1.$

Since $D$ is symmetric, one can write $D = Q \tilde D Q^{\top}$ so that $\tilde D$ is diagonal with $\mu_1$ in the top left corner, and $\boldsymbol v$ is the first column of orthogonal matrix $Q$. The basis-invariance of trace gives
\[ S = {\rm tr}(\tilde D^2 Q^\top RQ). \]
Noting that $R$ is nonnegative definite, the trace can be bounded from below by the $(1,1)$-entry, whence
\[ S \geq \mu_1^2 \ \boldsymbol v \cdot R\boldsymbol v.\]
Since $\boldsymbol v\geq 0,$ this can be estimated from below by making the matrix $R$ entry-wise smaller, namely replacing it with $\Delta D$. But for the latter matrix, once again, we can replace $\boldsymbol v$ with $\frac{1}{\sqrt{|A|}}\boldsymbol 1$ to get a lower bound $\boldsymbol v\cdot R \boldsymbol v\gtrsim E(A)/|A|$, and hence 
$$
S\gtrsim \frac{(|D|\Delta)^2E(A)}{|A|^3}\,.
$$

On the other hand the quantity $S$, tautologically, is the number of solutions of the equation 
$$
(a-b) + (b-c) = a'-c':\; a-b,\,b-c \in D,\,a',c'\in A\,.
$$
It follows from Cauchy-Schwarz that
$$
S^2\leq E_3(A) \sum_{d_1,d_2\in D} r^2_{A-A}(d_1+d_2)\,,
$$
and since each of $d_1,d_2$ has at least $\Delta$ representations in $A-A$, this means
$$ S^2 \lesssim\; |A|^3\Delta^{-2} \sum_{x}r_{A-A+A-A}(x)r^2_{A-A}(x).$$

We partition $A-A$ into ``rich and poor'' sets $D_1$ and $D_2$, so that for some $\tau$ to be determined, $r_{A-A}(x)\leq \tau,$ for every $x\in D_1$.

We firstly consider the poor differences $D_1$. By the H\"older inequality 
$$ \sum_{x\in D_1} r_{A-A+A-A}(x)r_{A-A}^2(x) \leq 
\left(\sum_{x\in D_1} r_{A-A+A-A}(x)^{7/3}\right)^{3/7}\left(\sum_{x\in D_1} r_{A-A}(x)^{7/2}\right)^{4/7}\,.
$$
From \eqref{e:needed} we have, once again,
$$
\sum_x r_{A-A+A-A}(x)^{7/3}\lesssim N^4 E_{s-1}\,,
$$
and from the definition of $D_1$,
$$\sum_{x\in D_1} r_{A-A}(x)^{7/2}\lesssim N^3 \tau^{1/2}\,.
$$

As for the set $D_2$, we have, from \eqref{simple X_r bound},
$$
|D_2|\leq N^3/\tau^3\,.
$$
Without changing the notation, let us replace $D_2$ by its subset $\{x:\,\tau\leq r_{A-A}(x)<2\tau\}$: this will not have consequences, after a subsequent dyadic summation. Then the quantity to be estimated is 
$$
\sum_{x\in D_2} r_{A-A+A-A}(x)r_{A-A}^2(x) \leq \tau^2 |\{d=a_1+a_2-a_3-a_4:\,d\in D_2;a_1,\ldots,a_4\in A\}|\,.
$$
By the H\"older inequality, this is bounded by
$$
\tau^2 \left(\sum_{x}r_{A+A-A}(x)^{5/2}\right)^{2/5} \left(\sum_{x}r_{A+D}(x)^{5/3}\right)^{3/5}\,.
$$
The first bracketed term is estimated directly using \eqref{e:3-popular}. Moreover, by the second bound of Corollary \ref{cor:ST},
$$
\sum_{x}r_{A+D}(x)^{5/3}\ll N|D|^{4/3}\,.
$$

Putting everything together,
\begin{equation}\nonumber
|D|^4\Delta^6E^2(A) \lesssim |A|^9\left(N^{24/7}E_{s-1}^{3/7}\tau^{2/7} + N^{73/15}\tau^{-2/5}\right)\,.
\end{equation}
Optimising in $\tau$ yields
$$
\tau=N^{151/72}E_{s-1}^{-5/8}\,.
$$
Multiplying both sides by $\Delta^2$, using $E(A)\lesssim |D|\Delta^2$ (see \eqref{en:pop}) on the left and $\Delta\lesssim |A|^3/E(A)$ (see \eqref{en:pops}) on the right yields
$$
E^8(A) \lesssim N^{15+24/7+151/252} E_{s-1}^{1/4}\,.
$$
It remains to substitute an estimate for $E_{s-1}$. If $A$ is $2$-convex we can use Shkredov's bound $E_{s-1}\lesssim N^{32/13}\,$, and we arrive at 
$$
E(A) \lesssim N^{2+1705/3744} \leq N^{2+.4554}\,,
$$
for sufficiently large $N$\,.

One can iterate this bound for higher convexity (namely using it as $E_{s-1}$ if $s=3,$ etc.) and it is easily seen that the iterates converge rapidly. In the limit when $E(A)=E_{s-1}$ in the above calculation one gets
\begin{equation}\label{e:energyconv}
E(A) \lesssim N^{2+127/279}\leq N^{2+.4552}\,.
\end{equation}
\end{proof}

\section*{Acknowledgements}
{ We would like to thank Oliver Roche-Newton for providing more than a welcome advise throughout the process of writing this paper. The First Author would like to thank Oliver Clarke, Charley Cummings and Harry Petyt for a useful discussion about Lemma \ref{hyperplane lemma}. The Second Author is supported by the NSF Award 2001622. The Third Author has been partially supported by the Leverhulme Trust Grant RPG-2017-371.}

\bibliographystyle{plain}
\bibliography{bibliography}

\end{document}